\begin{document}

\title{Quotients of affine connection control systems 
}


\author{Qianqian Xia and Zhiyong Geng         
}


\institute{Qianqian  Xia\\ Department of Mechanics and Aerospace Engineering, Peking University, Beijing 100871, China\\
              \email{inertialtec@sina.com}
}

\date{Received: date / Accepted: date}

\maketitle

\begin{abstract}
In this paper, we investigate the existence of a subclass of quotients of affine connection control systems, which preserve the mechanical structures.
Both local and global sufficient and necessary conditions are given for the geodesically accessible affine connection control systems such that they can admit this subclass of quotients. The structural properties of the quotient map and the quotient mechanical control system are discussed. 
\end{abstract}
section{Introduction}

Quotients of affine control systems have been studied by several authors.
Due to the complicated nature of the original affine control systems, it is
desirable to study their properties by investigating their quotient affine control
systems.
Given an affine control system
\begin{equation}\label{Equ1}
\dot y = {f_0}(y) + \sum\limits_{i = 1}^r {{u_i}{f_i}(y)}, y \in M,
u_i \in {\mathbb{R}}, i= 1, \cdots ,r,
\end{equation}
where $M$ is a smooth n-dimensional manifold, a curve $y:I \to M$ is
a trajectory of the system ($\ref{Equ1}$) if it satisfies the above
equation. An affine control system
\begin{equation}\label{Equ2}
\dot z = {h_0}(z) + \sum\limits_{j = 1}^s {{v_j}{h_j}(z)}, z \in N,
v_j \in {\mathbb{R}}, j= 1, \cdots ,s
\end{equation}
on a smooth m-dimensional manifold $N$ is called the quotient system
of the system ($\ref{Equ1}$) if there exists a smooth surjective map
$\psi:M \to N$, such that whenever $y(t), t \in I$ is a trajectory
of the system ($\ref{Equ1}$), then $z(t) = \psi (y(t)),t \in I$ is a
trajectory of the system ($\ref{Equ2}$). The map $\psi$ is then
called a quotient map.

Some subclasses of quotients of affine control systems, such as quotients induced by symmetries \cite{1,2} and controlled invariance \cite{3}, have been studied by several authors. The quotients of some subclasses of affine control systems, such as Hamiltonian control system \cite{4} and hybrid control systems \cite{5} have also been investigated.

In this paper, we study the quotients of affine connection
control systems which can be considered as a subclass of affine control systems.

Based on the Lagrangian point of view on mechanics, a mechanical
control system \cite{6} can be given by  $4$-tuple $(Q,\nabla ,\Im
_0 ,d)$, where

(1) $Q$ is an $n$-dimensional configuration manifold;

(2) $\nabla$ is a symmetric affine connection on $Q$;

(3)$\Im _0  = (g_0 ,g_1 , \cdots ,g_m )$ is an $(m+1)$-tuple of smooth
vector fields on $Q$;

(4) $d:TQ \to TQ$ is a map sending the fiber $T_q Q$
  into the fiber $T_q Q$, for any $q \in Q$, linear on fibers.\\
Here $d$ represents the dissipative-type or gyroscopic-type forces
acting on the system, $g_0$ represents an uncontrolled force which
can be potential and $g_1,\cdots, g_m$ represent controlled forces.

A curve $\gamma :I \to Q,I \subseteq R$ is a trajectory of a
mechanical control system if it satisfies the equation:
\begin{equation}\label{Equ3}
\nabla _{\dot \gamma (t)} \dot \gamma (t) = g_0 (\gamma (t)) +
d(\dot \gamma (t)) + \sum\limits_{i = 1}^m {u_r g_r } (\gamma (t)).
\end{equation}
As a system on $TQ$, we have:
\begin{equation}\label{Equ4}
\dot \Upsilon (t) = S(\Upsilon (t)) + g_0 ^{vlft} (\gamma (t)) +
d^{vlft} (\dot \gamma (t)) + \sum\limits_{r = 1}^m {u_r g_r ^{vlft}
} (\gamma (t)).
\end{equation}

An affine connection control system $(Q,\nabla ,\Im )$ is a
mechanical control system for which $ d = 0,g_0  = 0$:
\begin{equation}\label{Equ5}
\nabla _{\dot \gamma (t)} \dot \gamma (t) = \sum\limits_{r = 1}^m
{u_r g_r } (\gamma (t)).
\end{equation}
The above equation can be written as:
\begin{equation}\label{Equ6}
\dot \Upsilon (t) = S(\Upsilon (t)) + \sum\limits_{r = 1}^m {u_r g_r
^{vlft} } (\gamma (t)).
\end{equation}

The investigation of morphisms in the category of affine connection
control systems was initiated by Lewis in \cite{7}, where the maps
there were assumed to be of the special form $(\phi, D\phi )$, with
$\phi$ a map between the configuration spaces. The state equivalence
problems for mechanical control systems were first studied clearly
in \cite{8}, where no particular forms of the diffeomorphisms
between the tangent bundles were assumed. A subclass of mechanical
control systems, namely that of the geodesically accessible systems were
introduced, which have been proved to possess unique mechanical
structures. In this paper, we will study the quotient mechanical
structures for geodesically accessible systems. We are particularly interested
in the quotient maps retaining the mechanical structures. Precisely,
given a geodesically accessible affine connection control system $(Q,\nabla ,\Im )$ , we will be interested in
answering the following two questions:

(i) Given a point $(x_0, v_0) \in TQ$ with $ v_0 \in T_{x_0}Q$, when does there exist a mechanical control system $(\tilde
Q,\tilde \nabla ,\tilde \Im _0 ,d)$ and a quotient map $\tau : U \to V$, such that
\begin{eqnarray}\label{Equ7}
 &&\tau _ *  S = \tilde S + \tilde g_0^{vlft}  + \tilde d^{vlft},\nonumber \\
 &&\tau _ *  g_r^{vlft}  = \tilde g_r^{vlft}?
\end{eqnarray}
Here we denote by $U$ and $V$  neighborhoods, respectively of
$(x_0, v_0) \in TQ$ and $(\tilde {x}_0, \tilde {v}_0) \in T\tilde
Q$.

(ii) When does there exist a mechanical control system $(\tilde
Q,\tilde \nabla ,\tilde \Im _0 ,d)$ and a quotient map $\tau : TQ \to T\tilde Q$ satisfying ($\ref{Equ7}$)?

We say that the affine connection control system $(Q,\nabla ,\Im )$
admits locally (globally) a quotient mechanical system if question (i) ((ii)) is solvable.

Comparing with the problem given in \cite{7}, no particular forms of
maps between the tangent bundles are assumed. It is our main
concern to recognize the structure of the quotient map
that gives a quotient mechanical control system and make a clear study
of the existence of quotients of mechanical control systems which are
also mechanical.

The main results of the paper are given in Theorem $\ref{result1}$ and
Theorem $\ref{result8}$, which are the solutions,  respectively of
question (i) and (ii). For geodesically accessible affine connection
control systems  $(Q,\nabla ,\Im )$ , we prove that if question (i)
is solvable, the quotient map must have the form $(\phi, D\phi )$,
where $\phi$ is a submersion between the configuration spaces.
Besides, the quotient mechanical system $(\tilde Q,\tilde \nabla
,\tilde \Im _0 ,d)$ is proved to be a geodesically accessible affine
connection system. Under the assumption of the completeness of some
vector fields, we show the fiber bundle structure of the
configuration manifold $Q$ such that the affine connection control
system $(Q,\nabla ,\Im )$ can admit globally a quotient mechanical
system. For detailed explanations for the notations involved above,
see section~\ref{sec2}

The paper is organized as follows. Preliminaries are provided in
section \ref{sec2}  Question (i) is investigated in
section~\ref{sec3} Question (ii) is studied in section \ref{sec4}
Conclusions are given in section \ref{sec5}

Throughout the paper, "smooth"  means analytic. Some of our results hold in the
$C^\infty$ category, but for all we say to be true, we need analyticity, so we make this a blanket assumption.
\section{Preliminaries}\label{sec2}

Let $Q$ be an $n$-dimensional manifold.  We denote by $C^\infty  (Q)$ the set of $C^\infty$ functions on $Q$ and by  $\Gamma^\infty (TQ)$ the set of  sections of the tangent bundle $TQ$.
An affine connection on $Q$ is a
map\[\nabla :\Gamma^\infty (TQ) \times \Gamma^\infty (TQ) \to \Gamma^\infty (TQ)\] \[(X,Y)
\to \nabla _X Y\] which satisfies the following conditions:

(AC1)$(X,Y) \to \nabla _X Y$ is $\mathbb{R}$-bilinear;

(AC2)$\nabla _{fX} Y = f\nabla _X Y$ for $f \in C^\infty  (Q)$;

(AC3)$\nabla _X (fY) = f\nabla _X Y + (L_X f)Y$ for $f \in
C^\infty(Q)$.

In local coordinates $(x^1  \cdots ,x^n )$ on $Q$, the Christoffel
symbols $\Gamma _{ij}^k $ for an affine connection $\nabla$ are
defined by \[\nabla _{\frac{\partial }{{\partial x^i }}}
\frac{\partial }{{\partial x^j }} = \Gamma _{ij}^k \frac{\partial
}{{\partial x^k }}, i, j = 1, \cdots ,n.\]

A geodesic of an affine connection $\nabla $ on $Q$ is a smooth
curve $\gamma $ on $Q$ satisfying $\nabla _{\dot \gamma (t)} \dot
\gamma (t) = 0$. In local coordinates, the equations for a geodesic
have the form\[ \ddot x^i  + \Gamma _{jk}^i \dot x^j \dot x^k  = 0,
i = 1, \cdots, n.\] These second-order equations are equivalent to
the system of first-order equations on $TQ$ given by
\begin{eqnarray}
 &&\dot x^i  = y^i, i = 1, \cdots, n,\nonumber \\
 &&\dot y^j  =  - \Gamma _{kl}^j y^k y^l, j = 1, \cdots, n. \nonumber
\end{eqnarray}
These equations define a vector field $S$ on $TQ$ which is called
 the geodesic spray of $\nabla $ and is given in local coordinates by
 \[S = y^i \frac{\partial }{{\partial x^i }} - \Gamma _{kl}^j y^k y^l
\frac{\partial }{{\partial y^j }}.\]

Let $D$ be a distribution on a manifold $Q$ with an affine connection
$\nabla$. we shall denote by $\Gamma^\infty (D)$  the set of sections
of the distribution $D$. The distribution $D$ is called geodesically invariant if
for every geodesic $c:[a,b] \to Q$ of $\nabla $, $\dot c(a) \in
D_{c(a)} $ implies that $\dot c(t) \in D_{c(t)}$ for each $t \in [a,b]$. We say $D$ is totally geodesic if it is
involutive and geodesically invariant. The affine connection $\nabla$ restricts to $D$ if
${\nabla _X}Y \in \Gamma^\infty (D)$ for any $X \in \Gamma^\infty (TQ)$ and $Y \in \Gamma^\infty (D)$.

The symmetric product of $X,Y \in \Gamma^\infty (TQ)$ is the vector field
\[\left\langle {X:Y} \right\rangle  = \nabla _X Y + \nabla _Y X.\] We use
$Sym(D)$ to denote the smallest distribution on $Q$ containing $D$ and
such that it is closed under the symmetric product. A distribution $D$
on $Q$ is geodesically invariant if and only if $\left\langle {X:Y}
\right\rangle \in \Gamma^\infty (D)$ for every $X,Y \in \Gamma^\infty (D)$.

We define the torsion tensor $T:\Gamma^\infty (TQ) \times \Gamma^\infty (TQ) \to \Gamma^\infty (TQ)$,
$T(X,Y) = \nabla _X Y - \nabla _Y X - [X,Y]$. We say $\nabla $ is
symmetric if $T(X,Y) = 0$ for all $X,Y \in \Gamma^\infty (TQ)$. In local
coordinate, it gives $\Gamma _{ij}^k  = \Gamma _{ji}^k ,1 \le i,j,k
\le n$. We define the curvature tensor $R:\Gamma^\infty (TQ) \times \Gamma^\infty (TQ) \times \Gamma^\infty (TQ)\to \Gamma^\infty (TQ)$,
$R(X,Y)W = \nabla _X \nabla _Y W - \nabla _Y \nabla _ X W - {\nabla _{[X,Y]}}W$. Throughout the paper, all affine connections are assumed to be symmetric.

For the affine connection $\nabla $ on $Q$, we have the splitting  ${T_{{v_q}}}TQ \simeq {H_{{v_q}}}(TQ) \oplus {V_{{v_q}}}(TQ)$ for
any ${v_q} \in TQ$. Here $HTQ$ denotes the horizontal subbundle and $VTQ$ denotes the vertical subbundle.
At each ${v_q} \in TQ$, the linear map ${T_{{v_q}}}{\tau _Q}:{T_{{v_q}}}TQ \to {T_q}Q$, where ${\tau _Q}:TQ \to Q$ denotes the projection map,
restricted to the horizontal subspace ${H_{{v_q}}}TQ$, is an isomorphism. We define the horizontal lift of $u_q$ to $v_q$ as:
\[hlft({v_q},{u_q}) = {({T_{{v_q}}}{\tau _Q}\left| {{H_{{v_q}}}} \right.TQ)^{ - 1}}({u_q}).\]
Then the horizontal lift of the vector field $X \in \Gamma^\infty (TQ)$ is the vector field ${X^H} \in \Gamma^\infty (TTQ)$ defined by ${X^H}({v_q}) = hlft({v_q},X(q))$, where $\Gamma^\infty (TTQ)$ denotes the set of sections of $TTQ$.
The vertical lift of $X \in \Gamma^\infty (TQ)$ is defined as \[X^{vlft} (v_q) = \frac{d}{{dt}}(v_q +
tX(q))\left| {_{t = 0} } \right., v_q  \in TQ.\]

In local coordinates, if $X = {X^i}\frac{\partial }{{\partial q{}^i}}$, then we have
\begin{equation}\label{Equ8}
{X^H}({v_q}) = {X^i}(q)(\frac{\partial }{{\partial {q^i}}} - \Gamma _{ik}^j(q){v^k}\frac{\partial }{{\partial {v^j}}}),
\end{equation}
\begin{equation}\label{Equ9}
X^{vlft}(v_q ) = X^i(q)\frac{\partial }{{\partial v^i }}.
\end{equation}

We have
\begin{equation}\label{Equ10}
[S,{X^{vlft}}]({v_q}) =  - X \oplus {\nabla _{{v_q}}}X,
\end{equation}
\begin{equation}\label{Equ11}
[X_1^{vlft},[S,X_2^{vlft}]]({v_q}) = 0 \oplus \left\langle {{X_1}:{X_2}} \right\rangle,
\end{equation}
\begin{equation}\label{Equ12}
[S,[S,{X^{vlft}}]]({v_q}) =  - 2{\nabla _{{v_q}}}X \oplus (R(X,{v_q}){v_q} + ver(\nabla _{{v_q}^H}^H{({\nabla _{{v_q}}}X)^{vlft}})),
\end{equation}
where ${\nabla ^H}$ denotes the horizontal lift of a connection and
$ver$ denotes the vertical projection map
$ve{r_{{v_q}}}:{T_{{v_q}}}TQ \to {V_{{v_q}}}{TQ}$.
\section{Quotient mechanical control systems induced by invariant
distributions}\label{sec3}
In this section we investigate question (i) given in the introduction.
\begin{definition}{Definition} \cite{8}
The system ($\ref{Equ3}$) is called geodesically
accessible at $x_0  \in Q$ if $Sym(g_1 , \cdots ,g_m )(x_0 ) =
T_{x_0 } Q$ and geodesically accessible if the above equality holds
for all $x_0  \in Q$.
\end{definition}
Our solutions of question (i) is the central result of this section and for the convenience of the reader we will formulate it now.
\begin{theorem}{Theorem}\label{result1}
Question (i) is solvable if and only if
there exists an involutive distribution $D$  with the affine connection $\nabla$ restricting to $D$, and the curvature tensor satisfying $R(X,v)v \in \Gamma^\infty  (D)$, for any $X \in \Gamma^\infty (D)$, $v \in \Gamma^\infty (TQ)$, besides,  $[g_i ,D] \subseteq D,i = 1, \cdots ,m$.
\end{theorem}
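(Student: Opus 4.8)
The plan is to prove both directions by exploiting the characterization of geodesic invariance via the symmetric product together with the bracket identities \eqref{Equ10}--\eqref{Equ12}, and the known rigidity result from \cite{8} that a geodesically accessible system has a unique mechanical structure. First, I would set up the necessity direction. Suppose question (i) is solvable, so there is a mechanical control system $(\tilde Q,\tilde\nabla,\tilde\Im_0,d)$ and a quotient map $\tau:U\to V$ satisfying \eqref{Equ7}. The first step is to pin down the form of $\tau$: using that $\tau$ pushes $g_r^{vlft}$ to $\tilde g_r^{vlft}$ and pushes $S$ to $\tilde S+\tilde g_0^{vlft}+\tilde d^{vlft}$, I would apply $\tau_*$ to the brackets $[S,g_r^{vlft}]$, $[g_i^{vlft},[S,g_r^{vlft}]]$, etc., and compare with \eqref{Equ10}--\eqref{Equ12} on the target side. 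Because the vertical and "mixed" pieces in these identities are separated by the direct-sum decomposition $TTQ\simeq HTQ\oplus VTQ$, and because the $g_r$ span $TQ$ under symmetric products (geodesic accessibility), this should force $\tau$ to preserve the vertical subbundle and to be linear on fibers, hence of the form $(\phi,D\phi)$ with $\phi$ a submersion — this is essentially the argument already invoked in the main-results paragraph and attributed to the analysis around \cite{8}. I would cite \cite{8} for the rigidity and carry out the bracket bookkeeping explicitly only to the extent needed.

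Given $\tau=(\phi,D\phi)$ with $\phi$ a surjective submersion, define $D=\ker T\phi$, an involutive distribution (kernel of a submersion) of constant rank. The next step is to translate the three conditions in \eqref{Equ7} into statements about $D$. From $\tau_* g_r^{vlft}=\tilde g_r^{vlft}$ and \eqref{Equ9} one reads off that $g_r$ is $\phi$-related to $\tilde g_r$; combined with $[g_i^{vlft},[S,g_r^{vlft}]]=0\oplus\langle g_i:g_r\rangle$ from \eqref{Equ11}, pushing forward and using that $\tau$ intertwines $S$ with $\tilde S+\tilde g_0^{vlft}+\tilde d^{vlft}$, the vertical component gives that $\langle g_i:g_r\rangle$ is $\phi$-related to $\langle \tilde g_i:\tilde g_r\rangle$, and iterating over symmetric products shows $\mathrm{Sym}(g_1,\dots,g_m)=TQ$ is $\phi$-related to $\mathrm{Sym}(\tilde g_1,\dots,\tilde g_m)=T\tilde Q$; so the $\tilde\nabla$, being determined by its geodesic spray which is $\tau_*$ of the "horizontal part" of $S$, is $\phi$-related to $\nabla$ in the appropriate sense. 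Then I would extract: (a) $\nabla$ restricts to $D$ — this comes from the fact that if $Y\in\Gamma^\infty(D)$ then $Y$ is $\phi$-related to $0$, and $\phi$-relatedness of $\nabla$ and $\tilde\nabla$ forces $\nabla_X Y$ to be $\phi$-related to $\tilde\nabla_{X'}0=0$, i.e. $\nabla_X Y\in\Gamma^\infty(D)$; (b) $[g_i,D]\subseteq D$ — since $g_i$ is $\phi$-related to $\tilde g_i$ and sections of $D$ are $\phi$-related to $0$, their brackets are $\phi$-related to $[\tilde g_i,0]=0$; (c) the curvature condition $R(X,v)v\in\Gamma^\infty(D)$ for $X\in\Gamma^\infty(D)$, $v\in\Gamma^\infty(TQ)$ — this is the subtle one and is where \eqref{Equ12} enters: applying $\tau_*$ to $[S,[S,X^{vlft}]]$ for $X\in\Gamma^\infty(D)$, the left factor $-2\nabla_{v}X$ already lies (vertically lifted) in $D$ by (a), and the right factor $R(X,v)v+ver(\nabla^H_{v^H}(\nabla_v X)^{vlft})$ must, after projection, be $\phi$-related to the corresponding expression on $T\tilde Q$ built from $\tilde X=0$, forcing the whole vertical expression into $D$; since the second summand $ver(\nabla^H_{v^H}(\nabla_v X)^{vlft})$ is already controlled by (a) applied twice, one concludes $R(X,v)v\in\Gamma^\infty(D)$.

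For sufficiency, I would run the construction in reverse. Given an involutive $D$ of constant rank with $\nabla$ restricting to $D$, the curvature condition, and $[g_i,D]\subseteq D$, pass (locally, or globally under the completeness hypotheses discussed for question (ii)) to the quotient $\phi:Q\to\tilde Q:=Q/D$, a surjective submersion with $\ker T\phi=D$. The condition that $\nabla$ restricts to $D$ is exactly what is needed for the pushed-forward connection $\tilde\nabla$ on $\tilde Q$ to be well-defined (the $D$-ambiguity in lifting vector fields from $\tilde Q$ washes out); the curvature condition $R(X,v)v\in\Gamma^\infty(D)$ is what guarantees that the geodesic spray $S$ is $\phi$-projectable in the sense needed so that the second-order part descends (via \eqref{Equ12}), producing possibly an extra $\tilde g_0^{vlft}+\tilde d^{vlft}$ term — indeed the mismatch $ver(\nabla^H_{v^H}(\nabla_v X)^{vlft})$-type terms are precisely what materialize as the gyroscopic/dissipative $d$ and the uncontrolled force $g_0$ on the quotient; and $[g_i,D]\subseteq D$ ensures each $g_i$ descends to a well-defined $\tilde g_i$ on $\tilde Q$, whence $g_i^{vlft}$ is $\tau$-related to $\tilde g_i^{vlft}$ with $\tau=(\phi,D\phi)$. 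Setting $\tilde\Im_0=(\tilde g_0,\tilde g_1,\dots,\tilde g_m)$ and $d$ as extracted, one verifies \eqref{Equ7} directly from \eqref{Equ8}--\eqref{Equ12}, and checks that $\tau=(\phi,D\phi)$ is genuinely a quotient map (trajectories push to trajectories), which is immediate from $\phi$-relatedness of all the data.

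The main obstacle I anticipate is the curvature bookkeeping in \eqref{Equ12}: isolating exactly which part of the vertical component of $[S,[S,X^{vlft}]]$ is "genuine curvature" $R(X,v)v$ versus the connection-derivative tail $ver(\nabla^H_{v^H}(\nabla_v X)^{vlft})$, and verifying that under the restriction hypothesis $\nabla|_D$ the tail is automatically tangent to $D$ so that the projectability of $S$ is equivalent to the single clean condition $R(X,v)v\in\Gamma^\infty(D)$. A secondary technical point is making the reduction $Q\to Q/D$ rigorous — locally this is just the foliation chart for the involutive $D$, but the global statement (needed for the analogue with question (ii)) requires the completeness assumptions flagged in the introduction and a fiber-bundle argument, which I would defer to the section treating question (ii).
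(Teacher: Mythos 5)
Your sufficiency direction is essentially the paper's: build $\tilde D=\mathrm{span}\{D^{vlft},[S,D^{vlft}]\}=\mathrm{span}\{D^{H},D^{vlft}\}$, check invariance under $S$ and $g_i^{vlft}$ via \eqref{Equ10}--\eqref{Equ12}, and read off the quotient in adapted coordinates. (One small correction there: under the stated hypotheses no $\tilde g_0$ or $\tilde d$ terms ``materialize'' --- the coordinate form \eqref{Equ28} shows the quotient is again an affine connection control system; the tail $ver(\nabla^{H}_{v^{H}}(\nabla_v X)^{vlft})$ is absorbed because $\nabla$ restricts to $D$, exactly as you anticipate.)

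The genuine gap is in your necessity direction, at the very first step. You take as given that $\tau$ must have the form $(\phi,D\phi)$, citing the rigidity of the mechanical structure from \cite{8} plus unspecified ``bracket bookkeeping.'' But the result in \cite{8} concerns \emph{state equivalence}, i.e.\ diffeomorphisms of $TQ$; it says nothing about quotient maps, which are submersions with nontrivial kernel, and the extension to submersions is precisely the hard content of the theorem. The paper's route is: (1) use geodesic accessibility and the Orbit Theorem to show $\tau$ is a submersion, reducing question (i) to the existence of an invariant distribution $\tilde D=\ker\tau_*$ on $TQ$ (Proposition~\ref{result2}); (2) a dimension count with the $\nu$-sequence \eqref{Equ13} to show the quotient system is itself geodesically accessible (Proposition~\ref{result4}); (3) the key structural step (Proposition~\ref{result5}): since $\nu$ for a geodesically accessible mechanical system spans the vertical bundle, one can choose $k$ independent combinations $V_i-\alpha_i^j V_j$ lying in $\tilde D$ and show the coefficients $\alpha_i^j$ depend only on base coordinates, so that $\tilde D$ contains a vertically lifted distribution $D^{vlft}$; only then do the bracket identities \eqref{Equ10}--\eqref{Equ12} applied to $[S,D^{vlft}]$ and $[S,[S,D^{vlft}]]$ force $\tilde D=\mathrm{span}\{D^{H},D^{vlft}\}$, which is what ``$\tau=T\Phi$'' means. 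Your proposal replaces steps (1)--(3) with the assertion that pushing brackets forward ``should force $\tau$ to preserve the vertical subbundle and to be linear on fibers''; that is the theorem's crux, not a routine verification, and without it your subsequent $\phi$-relatedness arguments for (a)--(c) have nothing to stand on (they also risk circularity, since ``$\phi$-relatedness of $\nabla$ and $\tilde\nabla$'' in the sense you need is essentially equivalent to the conclusion (a)). Relatedly, the obstacle you flag as the main one --- the curvature bookkeeping in \eqref{Equ12} --- is actually the easy part once $\tilde D=\mathrm{span}\{D^H,D^{vlft}\}$ is in hand.
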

Recall that a Riemannian manifold is said to be irreducible if  the representation of the holonomy group in $T_xQ$
is irreducible (see \cite{9}). Then a corollary follows immediately.
\begin{corollary}{Corollary}
Let $(Q,g)$ be an irreducible Riemannian manifold with $\nabla$ the
corresponding Levi-Civita connection. Then the question (i) is not solvable for any given geodesically
accessible affine connection control system $(Q,\nabla ,\Im )$.
\end{corollary}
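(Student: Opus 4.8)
The strategy is to invoke Theorem~\ref{result1} in its contrapositive form: it suffices to show that on an irreducible Riemannian manifold there is no nontrivial proper involutive distribution $D$ to which the Levi-Civita connection $\nabla$ restricts. Since a geodesically accessible system is by definition one with $Sym(g_1,\dots,g_m)(x_0) = T_{x_0}Q$, any candidate distribution $D$ satisfying the hypotheses of Theorem~\ref{result1} must in particular contain $\mathrm{span}(g_1,\dots,g_m)$ and hence equal $TQ$ unless it is a proper distribution excluding some $g_i$ direction; the point is that a proper $D$ with $\nabla$ restricting to it simply cannot exist when $(Q,g)$ is irreducible, so $D = TQ$ is forced, and then the condition $[g_i,D]\subseteq D$ is automatic but the quotient it produces is trivial (the one-point quotient), which does not qualify as a genuine quotient mechanical system. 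Thus question (i) is not solvable.

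The key step is the following classical fact, which I would state and cite from \cite{9}: if $\nabla$ is the Levi-Civita connection of $(Q,g)$ and $D$ is a distribution to which $\nabla$ restricts (i.e. $\nabla_X Y \in \Gamma^\infty(D)$ for all $X\in\Gamma^\infty(TQ)$, $Y\in\Gamma^\infty(D)$), then $D$ is parallel, hence its pointwise $g$-orthogonal complement $D^\perp$ is also parallel, and therefore $D_x \subseteq T_xQ$ is an invariant subspace for the holonomy group at $x$. First I would verify that ``$\nabla$ restricts to $D$'' is exactly the parallelism condition: from (AC1)--(AC3), $\nabla_X Y \in \Gamma^\infty(D)$ for all $X$ means every local section of $D$ has covariant derivative staying in $D$, which is the definition of a parallel subbundle. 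Then the standard holonomy argument (parallel transport around loops preserves $D$) shows $D_x$ is holonomy-invariant. Irreducibility of the holonomy representation on $T_xQ$ forces $D_x = \{0\}$ or $D_x = T_xQ$; by continuity of the rank, $D = 0$ or $D = TQ$.

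Once this dichotomy is in hand, I would finish by ruling out both alternatives as sources of a legitimate quotient. If $D = 0$, then the quotient map $\tau$ projects to a point and $\tilde Q$ is zero-dimensional, so no nontrivial mechanical control system results. If $D = TQ$, then $\tau$ is (locally) a diffeomorphism and the ``quotient'' is just the original system up to state equivalence, again not a genuine quotient. Hence in neither case does Theorem~\ref{result1} produce a solution to question (i), so question (i) is not solvable for any geodesically accessible $(Q,\nabla,\Im)$ on an irreducible Riemannian manifold.

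The main obstacle — really the only nonroutine point — is establishing cleanly that ``$\nabla$ restricts to $D$'' coincides with ``$D$ is $\nabla$-parallel'' and then linking parallelism to holonomy-invariance; this is where I would be careful to use the torsion-free and metric compatibility properties of the Levi-Civita connection (the latter to get that $D^\perp$ is also parallel, so that $D$ genuinely splits off as a holonomy subrepresentation rather than merely being an invariant subspace of a non-semisimple action). Everything else is bookkeeping around the definitions of geodesic accessibility and of a quotient map.
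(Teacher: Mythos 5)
Your argument is correct in substance but reaches the conclusion by a genuinely different route from the paper. The paper's proof is a one-line citation: it invokes from \cite{9} the equivalence ``$(Q,g)$ is irreducible if and only if the only totally geodesic distribution on $Q$ is $TQ$'' and then appeals to Theorem~\ref{result1}. You instead prove the needed dichotomy directly: you observe that the hypothesis of Theorem~\ref{result1} that ``$\nabla$ restricts to $D$'' (i.e.\ $\nabla_X Y\in\Gamma^\infty(D)$ for \emph{all} $X\in\Gamma^\infty(TQ)$) is precisely parallelism of the subbundle $D$, so $D_x$ is holonomy-invariant, and irreducibility forces $D=0$ or $D=TQ$. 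This is arguably the cleaner argument: it uses exactly the hypothesis appearing in Theorem~\ref{result1}, whereas ``totally geodesic'' in the paper's sense (involutive plus geodesically invariant, i.e.\ closed under the symmetric product) is a strictly weaker condition than parallelism, and the paper's cited equivalence is therefore not obviously the statement actually needed. Your explicit dismissal of the two trivial cases is also something the paper silently skips, and it is needed, since $D=TQ$ and $D=0$ do formally satisfy all conditions of Theorem~\ref{result1}.

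Two small slips to fix, neither fatal. First, your parenthetical claim that a candidate $D$ ``must contain $\mathrm{span}(g_1,\dots,g_m)$'' is unfounded: the hypothesis of Theorem~\ref{result1} is $[g_i,D]\subseteq D$, not $g_i\in\Gamma^\infty(D)$; you do not actually use this claim, so simply delete it. Second, you have the two degenerate cases reversed: in the paper's construction $D$ is the distribution one quotients \emph{by} (the fibers of $\Phi$ are tangent to $D$, and $\dim\tilde Q=n-\dim D$), so $D=0$ yields $\tau$ a local diffeomorphism (the ``quotient'' is the original system), while $D=TQ$ yields the zero-dimensional quotient. The conclusion that neither is a genuine quotient stands either way.
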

\begin{proof}
We know that a Riemannian manifold is irreducible if and only if the
only totally geodesic distribution on $Q$ is $TQ$ (see \cite{9}). Then the results follow immediately from Theorem \ref{result1}.
\end{proof}
Recall that a smooth involutive distribution $D$ is called invariant for the affine control system (\ref{Equ1})
if $[f_0,D]\subseteq D$ and $[f_i ,D]\subseteq D,i = 1,\cdots,r$  (see \cite{3}). In a neighborhood $V$ of $y_0$ with local coordinates $(y^1,y^2)$,
where $D = span\left\{ {\frac{\partial }{{\partial {y^1}}}} \right\}$, we have the following
local decomposition of the system (\ref{Equ1}):
\begin{eqnarray}
 &&{{\dot y}^1} = f_0^1({y^1},{y^2}) + \sum\limits_{i = 1}^r {{u_i}f_i^1(} {y^1},{y^2}),\nonumber \\
 &&{{\dot y}^2} = f_0^2({y^2}) + \sum\limits_{i = 1}^r {{u_i}f_i^2(} {y^2}). \nonumber
\end{eqnarray}
The system ${{\dot y}^2} = f_0^2({y^2}) + \sum\limits_{i = 1}^r {{u_i}f_i^2(} {y^2})$ is then a quotient system
of the affine control system (\ref{Equ1}) on the neighborhood $V$. It is said to be induced by
an invariant distribution of the system (\ref{Equ1}). The following proposition links the question (i) with the existence
of an invariant distribution for the system (\ref{Equ6}). We first introduce some notations. Let $e^{tX}(p)$ denote the flow of a vector field $X$
through a vector field $p$ after time t. Let $\mathcal {F}$ be a family of smooth vector fields on
a manifold $M$. We shall use $Lie^\infty \mathcal {F}$  to denote the Lie algebra generated by the vector fields in $\mathcal {F}$. The orbit of $\mathcal {F}$ through a point $p \in M$ is the set of all points ${e^{{t_1}{X_1}}}{e^{{t_2}{X_2}}} \cdots {e^{{t_k}{X_k}}}(p)$ for any vector fields $X_j \in \mathcal {F}$
and numbers $t_j$ for which this is defined.
\begin{proposition}{Proposition}\label{result2}
The question (i) is solvable if and only if there exists a quotient mechanical control system
$(\tilde Q,\tilde \nabla ,\tilde \Im _0 ,d)$ induced by an invariant distribution of the system (\ref{Equ6}).
\end{proposition}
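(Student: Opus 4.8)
The plan is to exploit the standard dictionary between quotient maps and their vertical distributions: a quotient map $\tau$ produces the distribution $\ker T\tau$, while conversely an involutive invariant distribution $\mathcal D$ produces, by passing to a (local) leaf space, a quotient map. I would work in a neighbourhood of $(x_0,v_0)$ and use throughout that $(Q,\nabla,\Im)$ is geodesically accessible, so that by the identities (\ref{Equ10}) and (\ref{Equ11}) — the first supplying the horizontal components of brackets of $S$ with vertical lifts, the second the vertical lifts of all iterated symmetric products — the Lie algebra generated by $S,g_1^{vlft},\dots,g_m^{vlft}$ has full rank on $TQ$ near $(x_0,v_0)$; i.e. the system (\ref{Equ6}) is accessible there.

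For the ``if'' direction, assume a quotient mechanical control system $(\tilde Q,\tilde\nabla,\tilde\Im_0,d)$ is induced by an involutive invariant distribution $\mathcal D$ of (\ref{Equ6}). I would pick coordinates $(\Upsilon^1,\Upsilon^2)$ around $(x_0,v_0)$ adapted to $\mathcal D=\mathrm{span}\{\partial/\partial\Upsilon^1\}$ and write the resulting decomposition of (\ref{Equ6}) as in the displayed equations above; invariance of $\mathcal D$ forces the $\Upsilon^2$-components of $S$ and of each $g_r^{vlft}$ to be independent of $\Upsilon^1$, so the $\Upsilon^2$-subsystem is a well-defined control system on the leaf space, which by hypothesis coincides — after identifying the leaf space with a neighbourhood $V\subseteq T\tilde Q$ — with the system (\ref{Equ4}) of $(\tilde Q,\tilde\nabla,\tilde\Im_0,d)$. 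Taking $\tau$ to be the projection $(\Upsilon^1,\Upsilon^2)\mapsto\Upsilon^2$, one then reads off $\tau_*S=\tilde S+\tilde g_0^{vlft}+\tilde d^{vlft}$ and $\tau_*g_r^{vlft}=\tilde g_r^{vlft}$, which is exactly (\ref{Equ7}); hence question (i) is solvable. This direction amounts to unwinding the definition of ``induced by an invariant distribution''.

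For the ``only if'' direction, assume question (i) is solvable, with $(\tilde Q,\tilde\nabla,\tilde\Im_0,d)$ and $\tau\colon U\to V$ satisfying (\ref{Equ7}). The first step — and the one I expect to be the main obstacle — is to show that, after shrinking $U$, $\tau$ is a surjective submersion near $(x_0,v_0)$; I would obtain this from the accessibility of (\ref{Equ6}) recorded above together with the constant-rank theorem and the surjectivity of $\tau$ onto the open set $V$, which together rule out a rank drop of $T\tau$ at the base point. Granting this, I would set $\mathcal D:=\ker T\tau$. It is a regular distribution; it is involutive because a bracket of $\tau$-vertical vector fields is $\tau$-related to $0$ and hence again $\tau$-vertical; and it is invariant for (\ref{Equ6}) because, for $Z\in\Gamma^\infty(\mathcal D)$, the fields $S$ and $g_r^{vlft}$ are $\tau$-related to $\tilde S+\tilde g_0^{vlft}+\tilde d^{vlft}$ and $\tilde g_r^{vlft}$ by (\ref{Equ7}) while $Z$ is $\tau$-related to $0$, so $[S,Z]$ and $[g_r^{vlft},Z]$ are $\tau$-related to $0$ and therefore lie in $\Gamma^\infty(\mathcal D)$. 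Finally, in coordinates adapted to $\mathcal D$ the quotient of (\ref{Equ6}) induced by $\mathcal D$ is precisely the push-forward system $\tau_*S+\sum_r u_r\,\tau_*g_r^{vlft}=(\tilde S+\tilde g_0^{vlft}+\tilde d^{vlft})+\sum_r u_r\tilde g_r^{vlft}$ on $V$, i.e. the system (\ref{Equ4}) of $(\tilde Q,\tilde\nabla,\tilde\Im_0,d)$; hence this mechanical control system is exactly the quotient induced by the invariant distribution $\mathcal D$, which is what we wanted.

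The step that I expect to cost the most care is thus the regularity assertion in the ``only if'' direction — that $\ker T\tau$ has locally constant rank near $(x_0,v_0)$, equivalently that $\tau$ is a submersion there. This is where geodesic accessibility of $(Q,\nabla,\Im)$ genuinely enters, through the bracket identities (\ref{Equ10}) and (\ref{Equ11}); once it is established, both implications follow by routine manipulations with $\tau$-relatedness.
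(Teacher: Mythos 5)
Your overall architecture coincides with the paper's: both directions reduce to showing that $\tau$ is a submersion and then passing to coordinates adapted to $\ker T\tau$; your treatment of involutivity and invariance of $\ker T\tau$ via $\tau$-relatedness, and of the ``if'' direction, is fine and matches what the paper leaves as ``obvious''. The problem is the one step you yourself single out as the main obstacle: your justification that $\tau$ is a submersion does not work as stated. Surjectivity of a smooth (even analytic) map onto an open set does not prevent a rank drop at the particular base point (think of $x\mapsto x^3$), and the constant-rank theorem cannot be invoked because constant rank is exactly what is in question; accessibility of the \emph{source} system (\ref{Equ6}) by itself only tells you that $Lie^\infty\left\{ S,g_1^{vlft},\dots,g_m^{vlft}\right\}$ spans $T(TQ)$, which gives no lower bound on the rank of $T\tau$.

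The argument that actually closes this gap --- and the one the paper uses --- runs through the \emph{target} family. Since each generator on $U$ is $\tau$-related to the corresponding generator on $V$ by (\ref{Equ7}), every iterated bracket upstairs is $\tau$-related to the corresponding bracket downstairs, so the image of $T_z\tau$ contains the evaluation at $\tau(z)$ of $Lie^\infty\left\{\tilde S+\tilde g_0^{vlft}+\tilde d^{vlft},\tilde g_1^{vlft},\dots,\tilde g_m^{vlft}\right\}$. One then shows this Lie algebra has full rank at every point of $V$: geodesic accessibility makes the orbit of $\left\{S,g_i^{vlft}\right\}$ all of $U$, $\tau$ carries orbits into orbits, and surjectivity of $\tau$ forces the orbit of the target family to be all of $V$; by Sussmann's Orbit Theorem in the analytic category the tangent space to that orbit at each point is exactly the evaluation of the Lie algebra, which is therefore full-dimensional everywhere on $V$. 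Hence $T_z\tau$ is onto for every $z$, i.e.\ $\tau$ is a submersion. This is where surjectivity and geodesic accessibility genuinely enter --- through the orbits and the Orbit Theorem, not through the constant-rank theorem --- and it is also where the paper's blanket analyticity assumption is used. With that substitution your proof is complete and essentially identical to the paper's.
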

\begin{proof}
To prove the "only if" part, we need to show that the quotient map $\tau$ is a submersion. Then the result follows immediately according to the local representatives for submersions.

According to the condition (\ref{Equ7}), $\tau$ takes the orbit of $\left\{ {S,g_1^{vlft}, \cdots ,g_m^{vlft}} \right\}$
into the orbit of $\left\{ {\tilde S + \tilde g_0^{vlft}  + \tilde d^{vlft}, \tilde g_1^{vlft}, \cdots ,\tilde g_m^{vlft}} \right\}.$
Since the system $(Q,\nabla ,\Im )$ is geodesically accessible,
the orbit of $\left\{ {S,g_1^{vlft}, \cdots ,g_m^{vlft}} \right\}$ equals to $U$ when restricted to the neighborhood $U$ on $TQ$.
 Then we know that the orbit of $\left\{ {\tilde S + \tilde g_0^{vlft}  + \tilde d^{vlft}, \tilde g_1^{vlft}, \cdots ,\tilde g_m^{vlft}} \right\}$ equals to the neighborhood $V$ because $\tau$ is a surjective map. According to the Orbit Theorem  \cite{10}, we know that
\[Lie^\infty \left\{ {\tilde S + \tilde g_0^{vlft}  + \tilde d^{vlft}, \tilde g_1^{vlft}, \cdots ,\tilde g_m^{vlft}} \right\}_{({\tilde x}_0, {\tilde v}_0)}= T_{({\tilde x}_0, {\tilde v}_0)}V, \forall ({\tilde x_0},{\tilde v_0}) \in V.\]
This yields that $\tau$ is a submersion. The proof of the "if" part is obvious.
\end{proof}
Following we will investigate when a geodesically accessible affine connection control system admits
a quotient mechanical control system induced by an invariant distribution.
The notion of geodesical accessibility was
introduced in \cite{8}, from which we have known all structure information about
this subclass of systems is encoded in the Lie algebra generated by Lie
brackets for the system's vector fields. We recall some results
given in \cite{8}.
Given an affine control system ($\ref{Equ1}$) on $M$, define the
following sequence of families of vector fields:

\begin{eqnarray}\label{Equ13}
 &&\nu _1  = \left\{ {f_i :1 \le i \le r} \right\}\nonumber \\
 &&\nu _2  = \left\{ {[f_i ,ad_{f_0} f_j ]:1 \le i,j \le r} \right\} \nonumber \\
 &&\cdots \nonumber \\
 &&\nu _i  = \mathop  \cup \limits_{p + l = i} [\nu _p ,ad_{f_0} \nu _l ].
\end{eqnarray}
Put \[ \nu : = span_R \mathop  \cup \limits_{i = 1}^\infty  \nu _i.
\]
Then we have:
\begin{theorem}{Theorem} \cite{8} \label{result3}
        Let M be a smooth 2n-dimensional manifold. The system ($\ref{Equ1}$) is locally at $y_0  \in M$,
        state equivalent to a geodesically accessible mechanical system  if and only
        if\\
(1)$\dim \nu (y) = n$ and $\dim (\nu  + [f_0,\nu ])(y) = 2n$,\\
(2)$[\nu ,\nu ](y) = 0$,\\
for any $y$ in a neighborhood of $y_0$.\\
The system ($\ref{Equ1}$) is locally at  $y_0  \in M$, state
equivalent to a geodesically accessible mechanical system around a zero-velocity point $(x_0 ,0)$ if and only
if besides the conditions (1) and (2) hold, we have\\
(3) $f_0(y_0 ) \in \nu (y_0 )$.\\
\end{theorem}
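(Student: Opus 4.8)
The proof proceeds by establishing the two implications separately, in each case transporting the candidate mechanical system to its canonical realization on $TQ$ and comparing the towers $\{\nu_i\}$.

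\emph{Necessity.} Suppose there is a local diffeomorphism $\Phi$ near $y_0$ with $\Phi_*f_0=S+g_0^{vlft}+d^{vlft}$ and $\Phi_*f_i=g_i^{vlft}$ for a geodesically accessible mechanical system $(Q,\nabla,\Im_0,d)$. Since a state equivalence is a Lie-algebra isomorphism, $\Phi_*\nu_i$ is generated by the same iterated brackets formed from $S+g_0^{vlft}+d^{vlft}$ and the $g_i^{vlft}$. Using that vertical lifts commute, that $[g_0^{vlft},\cdot]=0$, that $[d^{vlft},X^{vlft}]$ is again vertical, together with (\ref{Equ10})--(\ref{Equ11}), one shows inductively that $\Phi_*\nu_i$ consists of the vertical lifts of the $i$-fold symmetric products of $g_1,\dots,g_m$; hence $\Phi_*\nu=(Sym(g_1,\dots,g_m))^{vlft}$. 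Geodesic accessibility, $Sym(g_1,\dots,g_m)=TQ$, then gives $\dim\nu=n$, while $\Phi_*\nu=VTQ$ is spanned by fibrewise-constant vertical vector fields, so $[\nu,\nu]=0$. For the second part of (1), read off from (\ref{Equ10}) that the horizontal part of $[S+g_0^{vlft}+d^{vlft},X^{vlft}]$ is $-X^H$ (the two remaining summands contribute nothing horizontal); letting $X$ range over a local frame for $Sym(g_1,\dots,g_m)=TQ$ yields $\Phi_*(\nu+[f_0,\nu])=VTQ\oplus HTQ=TTQ$, which is $2n$-dimensional. Finally, at a zero-velocity point of $TQ$ both $S$ and $d^{vlft}$ vanish, so $\Phi_*f_0$ there equals the vertical vector $g_0^{vlft}$; hence $f_0(y_0)\in\nu(y_0)$.

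\emph{Sufficiency.} Conversely, assume (1)--(2) (and (3) in the refined statement). By (2) the generators of $\nu$ pairwise commute; since $\dim\nu=n$ we may pick $n$ of them that are pointwise independent near $y_0$ and straighten them simultaneously, obtaining coordinates $(x,v)=(x^1,\dots,x^n,v^1,\dots,v^n)$ with $\nu=\mathrm{span}\{\partial/\partial v^1,\dots,\partial/\partial v^n\}$; set $Q$ equal to the (local) leaf space, with coordinates $x$ and projection $\pi:(x,v)\mapsto x$. Writing $f_0=a^i(x,v)\,\partial/\partial x^i+b^j(x,v)\,\partial/\partial v^j$, the bracket $[f_0,\partial/\partial v^j]=-(\partial a^i/\partial v^j)\,\partial/\partial x^i-(\partial b^k/\partial v^j)\,\partial/\partial v^k$ together with $\dim(\nu+[f_0,\nu])=2n$ forces the matrix $(\partial a^i/\partial v^j)$ to be invertible near $y_0$; the fibrewise change of coordinates $v\mapsto a(x,v)$ then leaves $\nu$ unchanged and puts $f_0$ into the second-order form $v^i\,\partial/\partial x^i+b^j(x,v)\,\partial/\partial v^j$, which identifies the neighbourhood with one in $TQ$. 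The crux is to show that each analytic function $b^j$ is a polynomial of degree at most two in $v$: expanding $b^j$ in its (convergent) Taylor series in $v$, any homogeneous term of degree $\ge 3$ propagates through the iterated brackets defining the $\nu_i$ into vector fields having a nonzero $\partial/\partial x$-component, which would force $\dim\nu>n$ and contradict (1); hence $b^j=-\Gamma^j_{kl}(x)v^kv^l+D^j_k(x)v^k+g_0^j(x)$ with $\Gamma^j_{kl}$ symmetric in $k,l$. These data assemble into a symmetric affine connection $\nabla$, a fibrewise-linear map $d$ and a vector field $g_0$ on $Q$; a further examination of the tower shows that each $f_i$ is itself a vertical lift, $f_i=g_i^{vlft}$, and rewriting $\dim\nu=n$ via (\ref{Equ11}) says exactly $Sym(g_1,\dots,g_m)(x_0)=T_{x_0}Q$, i.e.\ geodesic accessibility. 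In the refined statement, $f_0(y_0)\in\nu(y_0)$ forces $v_0=0$ in the above coordinates, so $y_0$ corresponds to a zero-velocity point, completing the construction.

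The main obstacle is the crux of the sufficiency part: proving that the drift $b$ is fibre-quadratic and that the coefficients $\Gamma^j_{kl}$, $D^j_k$, $g_0^j$ transform, under changes of the base coordinates, as a connection, a $(1,1)$-tensor and a vector field on $Q$. This is where conditions (1)--(2) have to be used through the full tower $\{\nu_i\}$ rather than merely through $\nu$ and $\nu+[f_0,\nu]$, and where analyticity of the data is essential; by contrast the bookkeeping on the $TQ$-side in the necessity direction is routine.
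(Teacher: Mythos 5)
First, note that the paper does not prove this statement at all: Theorem~\ref{result3} is imported verbatim from Ricardo--Respondek \cite{8}, so there is no in-paper proof to compare against. Your reconstruction follows the same overall route as the original source (push the tower $\{\nu_i\}$ through the equivalence and identify it with vertical lifts of iterated symmetric products for necessity; straighten $\nu$, normalize the drift to second-order form, and extract a fibre-quadratic drift for sufficiency), and the necessity half is essentially complete and correct: the computation that $\Phi_*\nu_i$ consists of vertical lifts of $i$-fold symmetric products via (\ref{Equ10})--(\ref{Equ11}), that $[g_0^{vlft},\cdot]$ and $[d^{vlft},X^{vlft}]$ contribute nothing new, and that the horizontal parts of $[f_0,\nu]$ fill out $HTQ$, is exactly the right bookkeeping.

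The gap is in the crux of sufficiency, and it is a real one. You exclude fibre-degree $\ge 3$ terms of $b^j$ by claiming they ``propagate into vector fields having a nonzero $\partial/\partial x$-component, which would force $\dim\nu>n$.'' At the first level where the third fibre-derivative of $b$ could appear this is false: for fibre-constant vertical $f_i$, the $\partial/\partial x$-component of $[f_i,\mathrm{ad}_{f_0}f_j]$ is $f_i^k\,\partial(\mathrm{ad}_{f_0}f_j)^x/\partial v^k$, and $(\mathrm{ad}_{f_0}f_j)^x=-f_j^\cdot(x)$ is fibre-independent, so $\nu_2$ stays inside the $n$-dimensional vertical distribution and condition (1) is not violated pointwise no matter what $b$ is. What a cubic term actually violates is condition (2): the vertical components of $\nu_2$ are (contractions of) $\partial^2 b/\partial v^2$, and $[\nu_1,\nu_2]=0$ together with the fact that $\nu$ spans the whole vertical space forces these second fibre-derivatives to be fibre-constant, i.e.\ $b$ quadratic in $v$. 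Your $\partial/\partial x$-mechanism only appears one level deeper (in $[\nu_1,\mathrm{ad}_{f_0}\nu_2]$) and then only in the fibre-directions probed by $\nu_1$, so as stated the contradiction with (1) does not go through. Two smaller points: after the fibrewise change of coordinates $v\mapsto a(x,v)$ you must re-justify that $\nu$ is still spanned by fibre-constant vertical fields (this again comes from $[\nu,\nu]=0$, not for free); and the claim that each $f_i$ is a vertical lift is immediate from $[f_i,\partial/\partial v^j]=0$, i.e.\ from (2), and deserves to be said rather than deferred to ``a further examination of the tower.''
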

The following proposition indicates the uniqueness of the mechanical structure of the quotient
mechanical control system induced by an invariant distribution for the geodesically accessible
mechanical system.

\begin{proposition}{Proposition}\label{result4}
         If a geodesically accessible mechanical system ($\ref{Equ4}$) admits a quotient
         mechanical control system induced by an invariant distribution, then the
         quotient system is geodesically accessible.
\end{proposition}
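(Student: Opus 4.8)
The plan is to combine the Lie--algebraic description behind Theorem~\ref{result3} with the observation that the quotient map attached to an invariant distribution is a surjective submersion intertwining the relevant vector fields of the two systems.

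First I would record a fact internal to any mechanical control system of the form (\ref{Equ4}): writing $f_0 = S + g_0^{vlft} + d^{vlft}$ for the drift and forming the family $\nu$ from the control fields $g_r^{vlft}$ by the recursion (\ref{Equ13}), every member of $\nu$ is a vertical lift, and $span_R\,\nu = (Sym(g_1,\dots,g_m))^{vlft}$. This is a short induction: $\nu_1 = \{g_r^{vlft}\}$; since $[g_0^{vlft},X^{vlft}]=0$ and $[d^{vlft},X^{vlft}]$ is again the vertical lift of a vector field on $Q$, identity (\ref{Equ11}) gives $[Y^{vlft},ad_{f_0}X^{vlft}] = \langle Y:X\rangle^{vlft}$, and iterating this reproduces exactly the iterated symmetric products of the $g_r$. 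From (\ref{Equ10}) one reads off that $[f_0,Z^{vlft}]$ has horizontal part $-Z^H$; hence, evaluating at a point and projecting away the vertical subbundle, one gets for the quotient system the one-sided estimate
\[
\dim\big(\tilde\nu + [\tilde f_0,\tilde\nu]\big)(\tilde v_{\tilde q})\ \le\ \dim Sym(\tilde g_1,\dots,\tilde g_m)(\tilde q) + \dim\tilde Q ,
\]
while for the original system, which is geodesically accessible, $Sym(g_1,\dots,g_m) = TQ$ forces $\nu + [f_0,\nu] = TTQ$ at every point.

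Second, I would transport structure along $\tau$. Because the quotient is induced by an invariant distribution, in the adapted coordinates of the decomposition preceding Proposition~\ref{result2} the map $\tau$ is the projection onto the base factor, hence a surjective submersion, and the block form of that decomposition shows that $f_0$ and each $g_r^{vlft}$ are $\tau$-projectable, with $\tau_* f_0 = \tilde S + \tilde g_0^{vlft} + \tilde d^{vlft} = \tilde f_0$ and $\tau_* g_r^{vlft} = \tilde g_r^{vlft}$. Since $\tau$-relatedness is preserved by the Lie bracket and by $ad_{f_0}$, an induction along (\ref{Equ13}) shows that each vector field of $\nu_i$ is $\tau$-related to the corresponding vector field of $\tilde\nu_i$; together with surjectivity of $\tau$ this yields $d\tau\big((\nu + [f_0,\nu])(v_q)\big) = (\tilde\nu + [\tilde f_0,\tilde\nu])(\tau(v_q))$ for every $v_q \in TQ$.

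Combining the two steps finishes the proof: pushing the full-rank identity $(\nu + [f_0,\nu])(v_q) = T_{v_q}TQ$ forward by the submersion $\tau$ gives $(\tilde\nu + [\tilde f_0,\tilde\nu])(\tilde v_{\tilde q}) = T_{\tilde v_{\tilde q}}T\tilde Q$, a space of dimension $2\dim\tilde Q$; comparing with the estimate above forces $\dim Sym(\tilde g_1,\dots,\tilde g_m)(\tilde q)\ge\dim\tilde Q$, hence equality, i.e. $Sym(\tilde g_1,\dots,\tilde g_m)(\tilde q) = T_{\tilde q}\tilde Q$ at every $\tilde q$ --- which is precisely geodesic accessibility of the quotient. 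The step I expect to cost the most care is the first one: identifying $span_R\,\nu$ with $(Sym)^{vlft}$ and, for the quotient, extracting the correct \emph{one-sided} bound --- the vertical components produced by $[\tilde f_0,\cdot]$ need not lie in $(Sym(\tilde g_1,\dots,\tilde g_m))^{vlft}$, so only an inequality is available there, and the rest of the argument is arranged so that an inequality suffices. The $\tau$-relatedness bookkeeping along (\ref{Equ13}) is routine but is exactly the mechanism that carries the full-rank condition down to $T\tilde Q$.
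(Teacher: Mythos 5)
Your proposal is correct and follows essentially the same route as the paper: both arguments push the full-rank condition $\dim(\nu+[f_0,\nu])=2n$ forward along the quotient map (the paper via the block form (\ref{Equ14})--(\ref{Equ16}), you via $\tau$-relatedness), and then squeeze $\dim\tilde\nu$ between the lower bound this forces and the upper bound coming from the mechanical structure of the quotient. The only differences are presentational: you work invariantly with the identification $span_R\,\nu=(Sym(g_1,\dots,g_m))^{vlft}$ and the horizontal/vertical splitting and conclude directly from the definition of geodesic accessibility, whereas the paper computes in adapted coordinates and routes the conclusion through conditions (1)--(2) of Theorem~\ref{result3}.
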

\begin{proof}
We use the the local coordinates $(U,x{}^i)$ in a neighborhood of
$(q,v) \in TQ$ such that the invariant distribution $\tilde D =
span\left\{ {\frac{\partial }{{\partial x^1 }}, \cdots
,\frac{\partial }{{\partial x^{2k} }}} \right\}$.\\
Denote $x = (x^1 , \cdots ,x^{2k} ),y = (x{}^{2k + 1}, \cdots
,x^{2n})$. Then ($\ref{Equ4}$) takes the form
\begin{eqnarray}\label{Equ14}
 &&\dot x = f^1 (x,y) + \sum\limits_{i = 1}^m {u_i } g_i^1 (x,y),\nonumber \\
 &&\dot y = f^2 (y) + \sum\limits_{i = 1}^m {u_i } g_i^2 (y),
\end{eqnarray}
where
\begin{equation}\label{Equ15}
\dot y = f^2 (y) + \sum\limits_{i = 1}^m {u_i } g_i^2 (y)
\end{equation}
is a mechanical system.\\
According to the construction ($\ref{Equ13}$) of Lie algebra $\nu $
for the system ($\ref{Equ4}$), we conclude that any vector field $G \in \nu $
has the form
\begin{equation}\label{Equ16}
G = \left( {\begin{array}{*{20}c}
   {G_1 }  \\
   {G_2 }  \\
\end{array}} \right),
\end{equation}
where $G_2 $ belongs to the Lie algebra  $\tilde \nu $ given
by ($\ref{Equ13}$) for the system ($\ref{Equ15}$). Since the system ($\ref{Equ4}$) is
geodesically accessible, we have $\dim (\nu  + [f,\nu ])(z) = 2n$
where $z = (z^1 ,z^2 )$ is in a neighborhood of $(q,v) = (z_0^1
,z_0^2 )$.\\
 This gives
\begin{equation}\label{Equ17}
\dim (\tilde \nu  + [f^2 ,\tilde \nu ])(z^2 ) = 2n - 2k,
\end{equation}
for $z^2 $ in a neighborhood of $z_0^2 $. \\So
\begin{equation}\label{Equ18}
\dim \tilde \nu (z^2 ) \ge n - k.
\end{equation}
On the other hand, since ($\ref{Equ15}$) is a mechanical control
system with dimension $2n-2k$, we have
\begin{equation}\label{Equ19}
\dim \tilde \nu (z^2 ) \le n - k.
\end{equation}
Then we derive
\begin{equation}\label{Equ20}
\dim \tilde \nu (z^2 ) = n - k,
\end{equation}
for $z^2 $  in a neighborhood of $z_0^2 $ according to ($\ref{Equ18}$) and
($\ref{Equ19}$).\\
From ($\ref{Equ16}$), we know that $\left[ {\nu ,\nu } \right](z) = 0$ yields $\left[ {\tilde \nu ,\tilde \nu } \right]({z^2}) = 0$.\\
So the mechanical system ($\ref{Equ15}$) is geodesically accessible according to Theorem $\ref{result3}$.
\end{proof}

\begin{remark}{Remark}
If locally around a zero velocity point, the geodesically accessible mechanical system ($\ref{Equ4}$) admits a quotient
mechanical control system induced by an invariant distribution , then from ($\ref{Equ14}$) and ($\ref{Equ16}$). we know
the quotient mechanical system ($\ref{Equ15}$) is accessible. Besides, it is also around a zero velocity point according to the condition (3) in Theorem $\ref{result3}$. Then we derive that the system ($\ref{Equ15}$) is also geodesically accessible because accessibility at the point of zero velocity is equivalent to geodesical accessibility.
However, if the geodesically accessible mechanical system ($\ref{Equ4}$) is not around a zero velocity point, we can not assume that the system ($\ref{Equ15}$)
is  around a zero velocity point. So the proof of the above proposition is
necessary since the equivalence between accessibility at the point with nonzero velocity and  geodesical accessibility may not hold.
\end{remark}

For fully actuated mechanical systems, we have:
\begin{corollary}{Corollary}
The quotient mechanical system induced by an invariant
distribution for a fully actuated mechanical control system is fully actuated.
\end{corollary}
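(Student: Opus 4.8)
The plan is to reduce the claim to the surjectivity of the differential of the quotient submersion, so that the configuration-level spanning property of the control forces is preserved by pushforward.

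First I would record the structure of the quotient. By Theorem~\ref{result1} — more precisely, by the structural description of the quotient map established in the course of solving question~(i) — when a quotient mechanical control system induced by an invariant distribution exists, the quotient map has the form $(\phi,D\phi)$ with $\phi\colon Q\to\tilde Q$ a surjective submersion between the configuration manifolds, and the invariant distribution $D$ on $Q$ satisfies $[g_i,D]\subseteq D$ for $i=1,\dots,m$. The latter condition is exactly what makes each control vector field $g_i$ $\phi$-projectable: there is a unique vector field $\tilde g_i$ on $\tilde Q$ with $\tilde g_i\circ\phi=T\phi\circ g_i$, and the relation $\tau_* g_i^{vlft}=\tilde g_i^{vlft}$ from~(\ref{Equ7}) together with $\tau=(\phi,D\phi)$ forces precisely $\tilde g_i=\phi_* g_i$. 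Equivalently, in the adapted coordinates of the decomposition~(\ref{Equ14})--(\ref{Equ15}) the quotient control forces are the projected components $g_i^2$.

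Second I would use full actuation. By definition $\{g_1(q),\dots,g_m(q)\}$ spans $T_qQ$ for every $q\in Q$. Since $\phi$ is a submersion, $T_q\phi\colon T_qQ\to T_{\phi(q)}\tilde Q$ is a linear surjection, so the images $T_q\phi(g_1(q)),\dots,T_q\phi(g_m(q))$ span $T_q\phi(T_qQ)=T_{\phi(q)}\tilde Q$. As $T_q\phi(g_i(q))=\tilde g_i(\phi(q))$, this says $\{\tilde g_1(\phi(q)),\dots,\tilde g_m(\phi(q))\}$ spans $T_{\phi(q)}\tilde Q$. Finally, surjectivity of $\phi$ onto $\tilde Q$ lets every point $\tilde q\in\tilde Q$ be written as $\phi(q)$, so $\{\tilde g_1(\tilde q),\dots,\tilde g_m(\tilde q)\}$ spans $T_{\tilde q}\tilde Q$ at every $\tilde q$; that is, the quotient mechanical control system $(\tilde Q,\tilde\nabla,\tilde\Im_0,d)$ is fully actuated.

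There is no deep obstacle here; the one point deserving care is the identification $\tilde g_i=\phi_* g_i$ of the quotient control forces with the $\phi$-pushforwards of $g_1,\dots,g_m$, which relies on the projectability granted by $[g_i,D]\subseteq D$ in Theorem~\ref{result1}. It is also worth emphasising that full actuation is strictly stronger than the geodesic accessibility already obtained in Proposition~\ref{result4} — the latter only yields $Sym(\tilde g_1,\dots,\tilde g_m)=T\tilde Q$ — and that it is cleaner to run the argument directly on $Q$ and $\tilde Q$ rather than on the state bundles $TQ$ and $T\tilde Q$.
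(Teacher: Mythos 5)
Your argument is correct and lands on the same underlying fact the paper relies on: the quotient control vector fields are the images of $g_1,\dots,g_m$ under a pointwise surjective linear projection, so a spanning family stays spanning. The paper's one-line proof points instead at the coordinate decomposition (\ref{Equ14}) in the proof of Proposition \ref{result4}: there the quotient controls are the components $g_i^2$, and the same dimension squeeze used for $\nu$ (applied now to $\nu_1=\mathrm{span}\{g_i^{vlft}\}$, whose dimension is $n$ by full actuation) forces $\dim\tilde\nu_1=n-k$. You route instead through the structural description of the quotient map ($\tau=T\phi$ with $\phi$ a submersion, $\tilde g_i=\phi_*g_i$), which is cleaner but borrows Corollary \ref{result6}/the projectability granted by $[g_i,D]\subseteq D$ — machinery established for the affine connection case — whereas the coordinate argument via Proposition \ref{result5} and (\ref{Equ14}) works directly for general mechanical systems (\ref{Equ4}) with drift and dissipation. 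Either way the conclusion and the essential mechanism (surjective projection preserves spanning) coincide, so this is an acceptable filling-in of the paper's ``obvious'' step.
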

\begin{proof}
It is obvious from the proof of Proposition $\ref{result4}$.
\end{proof}
Following we will focus on recognizing the structure of the invariant distribution that gives a quotient mechanical control system.
\begin{proposition}{Proposition}\label{result5}
        If the geodesically accessible mechanical control system ($\ref{Equ4}$)
        admits a quotient mechanical control system induced by a 2k-dimensional invariant distribution $\tilde D$,
        then there exists a k-dimensional distribution $D$ on $Q$ such that ${D^{vlft}}$ is contained in $\tilde D$.
\end{proposition}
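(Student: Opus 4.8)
The plan is to prove the sharper statement that the distribution $\Delta:=VTQ\cap\tilde D$ on $TQ$ is precisely the vertical lift $D^{vlft}$ of a $k$-dimensional analytic distribution $D$ on $Q$; since $\Delta\subseteq\tilde D$, this yields the proposition. I would work in local coordinates as in the proof of Proposition \ref{result4}, in which $\tilde D=\mathrm{span}\{\partial/\partial x^1,\dots,\partial/\partial x^{2k}\}$ and the quotient map $\tau$ is the projection onto the remaining coordinates, a surjective submersion with $\ker T\tau=\tilde D$. Since $\dim\tilde D=2k$, the quotient bundle $T\tilde Q$ has dimension $2n-2k$, and by Proposition \ref{result4} the quotient system is geodesically accessible. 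The bracket identities \eqref{Equ10}--\eqref{Equ11} (the contributions of $g_0^{vlft}$ and $d^{vlft}$ in the drift being vertical lifts that are annihilated by the subsequent $[\,\cdot^{vlft},-\,]$ brackets) show that each family $\nu_i$ in \eqref{Equ13} consists of vertical lifts of iterated symmetric products of $g_1,\dots,g_m$; hence $\nu=(Sym(g_1,\dots,g_m))^{vlft}=VTQ$ by geodesic accessibility, and similarly $\tilde\nu=VT\tilde Q$. Because $\tau$ carries the generators $g_r^{vlft},\ \langle g_i:g_j\rangle^{vlft},\dots$ of $VTQ$ to the corresponding generators of $VT\tilde Q$, we get $T\tau(VTQ)=VT\tilde Q$, so $\Delta=\ker(T\tau|_{VTQ})$ is a distribution of constant rank $n-(n-k)=k$.

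The main step is to show that for every vector field $Z$ on $Q$ the vertical lift $Z^{vlft}$ normalizes $\Delta$, that is, $[Z^{vlft},\Gamma^\infty(\Delta)]\subseteq\Gamma^\infty(\Delta)$. A coordinate computation gives $[Z^{vlft},\Gamma^\infty(VTQ)]\subseteq\Gamma^\infty(VTQ)$ for any $Z$. Next, the set of vector fields on $TQ$ that normalize $\tilde D$ is a Lie algebra by the Jacobi identity; since $\tilde D$ is an invariant, involutive distribution, this Lie algebra contains $\Gamma^\infty(\tilde D)$, the drift $S+g_0^{vlft}+d^{vlft}$, and every $g_r^{vlft}$, hence the whole family $\bigcup_i\nu_i$, that is, $\langle g_{i_1}:\langle g_{i_2}:\cdots\rangle\rangle^{vlft}$ for all iterated symmetric products. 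By geodesic accessibility these iterated symmetric products span $T_qQ$ at each $q$, so locally $Z=\sum_\alpha c_\alpha Z_\alpha$ with each $Z_\alpha^{vlft}$ normalizing $\tilde D$, whence $Z^{vlft}=\sum_\alpha(c_\alpha\circ\tau_Q)Z_\alpha^{vlft}$. For $W\in\Gamma^\infty(\Delta)$ --- which is in particular vertical, hence annihilates functions pulled back from $Q$ --- the terms $W(c_\alpha\circ\tau_Q)Z_\alpha^{vlft}$ drop out, so $[Z^{vlft},W]=\sum_\alpha(c_\alpha\circ\tau_Q)[Z_\alpha^{vlft},W]$; each $[Z_\alpha^{vlft},W]$ lies in $\Gamma^\infty(VTQ)$ and in $\Gamma^\infty(\tilde D)$, hence in $\Gamma^\infty(\Delta)$, and therefore $[Z^{vlft},W]\in\Gamma^\infty(\Delta)$.

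It remains to integrate this infinitesimal invariance. The flow of $Z^{vlft}$ is the fibrewise translation $v_q\mapsto v_q+tZ(q)$, whose tangent map is the identity on the vertical subspaces under the canonical identification $V_{v_q}TQ\cong T_qQ$; invariance of $\Delta$ under all such flows therefore forces the subspace $\Delta_{v_q}\subseteq T_qQ$ to be independent of $v_q$ along each (connected) fibre. Taking $D_q$ to be this common subspace of $T_qQ$ produces a $k$-dimensional analytic distribution $D$ on $Q$ with $\Delta=D^{vlft}$, and $D^{vlft}=\Delta\subseteq\tilde D$ is the required inclusion. The real content --- and the step I expect to be the main obstacle --- is the passage from normalization of $\tilde D$ by the distinguished generators $\langle g_{i_1}:\langle g_{i_2}:\cdots\rangle\rangle^{vlft}$ to normalization of $\Delta$ by arbitrary vertical lifts $Z^{vlft}$, where geodesic accessibility ($Sym(g_1,\dots,g_m)=TQ$) is essential; the homogeneity argument that then pushes $\Delta$ down to $Q$ is routine.
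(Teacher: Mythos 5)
Your proof is correct, and it isolates the same intermediate object as the paper --- the $k$-dimensional piece of $\tilde D$ sitting inside the vertical distribution --- but descends it to $Q$ by a different mechanism. The paper also picks $n$ spanning vector fields $V_1,\dots,V_n\in\nu$, projects them to $\tilde\nu$, extracts the $k$ dependent combinations $\tilde V_i=V_i-\alpha_i^jV_j\in\Gamma^\infty(\tilde D)$ (these span exactly your $\Delta=VTQ\cap\tilde D$), and then shows directly, using $[\tilde\nu,\tilde\nu]=0$ from Theorem \ref{result3} together with the fact that $\nu$ spans the vertical distribution, that the coefficients $\alpha_i^j$ are pulled back from $Q$, so that each $\tilde V_i$ is a vertical lift $v_i^{vlft}$. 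You replace that coefficient computation by an invariant argument: the normalizer of the invariant involutive distribution $\tilde D$ is a Lie algebra containing the system vector fields and hence all vertical lifts of iterated symmetric products; geodesic accessibility ($Sym(g_1,\dots,g_m)=TQ$) then upgrades this to normalization of $\Delta$ by arbitrary vertical lifts (your verticality trick killing the $W(c_\alpha\circ\tau_Q)$ terms is the exact analogue of the paper's step $V_l(\alpha_i^j)=0$); finally the flows of vertical lifts, being fibrewise translations acting as the identity on vertical subspaces, force $\Delta$ to be constant along fibres. What your route buys is a coordinate-free statement with the geometric content (translation-invariance of $\Delta$ along fibres) made explicit, at the cost of invoking the standard flow-invariance lemma for constant-rank distributions and the rank count $T\tau(VTQ)=VT\tilde Q$ (which, as you note, needs Proposition \ref{result4}); the paper's route is a shorter computation in coordinates adapted to $\tilde D$, but leans on the specific relations $V_i^2=\alpha_i^jV_j^2$. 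Both arguments ultimately rest on the same two facts: $\nu=VTQ$ for a geodesically accessible system, and invariance of $\tilde D$ under the system's Lie algebra.
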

\begin{proof}
 We use the local
coordinates $(U,x{}^i)$ in a neighborhood of $(q,v) \in TQ$ such
that the invariant distribution $ \tilde D = span\left\{
{\frac{\partial }{{\partial x^1 }}, \cdots ,\frac{\partial
}{{\partial x^{2k} }}} \right\}$.\\ Denote $x = (x^1 , \cdots
,x^{2k} ),y = (x{}^{2k + 1}, \cdots ,x^{2n} )$. Then the system ($\ref{Equ4}$)
takes the form
\begin{eqnarray}
 &&\dot x = f^1 (x,y) + \sum\limits_{i = 1}^m {u_i } g_i^1 (x,y),\nonumber \\
 &&\dot y = f^2 (y) + \sum\limits_{i = 1}^m {u_i } g_i^2 (y). \nonumber
\end{eqnarray}
From Proposition $\ref{result4}$, we know that
\begin{equation}\label{Equ21}
\dot y = f^2 (y) + \sum\limits_{i = 1}^m {u_i } g_i^2 (y)
\end{equation}
is a geodesically accessible mechanical system. \\
Choose n vector
fields  $V_1 , \cdots ,V_n$
 in the Lie algebra $\nu $ for the system ($\ref{Equ4}$) which are linearly independent at any $z$ in a neighborhood of
$(q,v) = (z_0^1 ,z_0^2 )$. \\
Locally,
\[V_i  = (V_i^1, V_i^2 ),\]
where $V_i^2 $ is in the Lie algebra $\tilde \nu $ for the system
($\ref{Equ21}$) and \[span\left\{ {V_i^2 :i = 1, \cdots ,n}
\right\}(z^2 ) = \tilde \nu
(z^2 ),\] for $z^2$ in a neighborhood of $z_0^2 $.\\
Since $\dim \tilde \nu (z^2 ) = n - k$, assume $V_1^2 , \cdots ,V_{n
- k}^2 $ are linearly independent, then we obtain there exist
functions $\alpha _i^j  \in C^\infty  (y), i = n - k + 1, \cdots, n, j
= 1, \cdots, n - k$ such that
\begin{equation}\label{Equ22}
V_i^2  = \alpha _i^j V_j^2 ,i = n - k + 1, \cdots, n, j = 1, \cdots, n - k.
\end{equation}
Let \[\tilde V_i  = V_i  - \alpha _i^j V_j, i = n - k + 1, \cdots, n, j = 1, \cdots ,n - k.\]
Then the $k$  linearly independent vector fields $\tilde V_{n - k +
1,} \cdots, \tilde V_n \in \Gamma^\infty (\tilde D)$.\\
From ($\ref{Equ22}$) we know \[[\alpha _i^j V_j^2 ,V_l^2 ] = 0, i = n
- k + 1, \cdots, n, j = 1, \cdots, n - k, l = 1, \cdots, n,\] which
yields \[V_l^2 (\alpha _i^j ) = 0, i = n - k + 1, \cdots, n, j = 1,
\cdots, n - k, l= 1, \cdots, n.\]
Since $\alpha _i^j  \in C^\infty  (y)$, we have
\begin{equation}\label{Equ23}
V_l (\alpha _i^j ) = 0, i = n - k + 1, \cdots, n, j = 1, \cdots, n -
k, l= 1, \cdots ,n.
\end{equation}
Consider the local coordinates $(V,z{}^i)$ in a neighborhood of $q
\in Q$ which give local coordinates $(z{}^1, \cdots ,z^n ,y^1 ,\cdots ,y^n )$
 on $TQ$. In such coordinates, \[span\left\{ {V_1 , \cdots ,V_n } \right\} =
span\left\{ {\frac{\partial }{{\partial y^1 }}, \cdots
,\frac{\partial }{{\partial y^n }}} \right\}.\]\\
So $\alpha _i^j  \in C^\infty  (z^1 , \cdots ,z^n )$ according to
($\ref{Equ23}$).\\
Then $\tilde V_i$ has the form  $\tilde V_i  = v_i ^{vlft}$
where $v_i$ is a vector field on $Q$.
This completes the proof for the proposition.
\end{proof}
Now we start to prove Theorem \ref{result1}.
\begin{proof}
We need to prove the following proposition:

The geodesically accessible affine connection control system ($\ref{Equ6}$) admits a quotient  mechanical system induced by an invariant distribution if and only if there exists an involutive distribution $D$  on $Q$ with the affine connection $\nabla$ restricting to $D$, and the curvature tensor satisfying $R(X,v)v \in \Gamma^\infty  (D)$, for any $X \in \Gamma^\infty (D)$, $v \in \Gamma^\infty (TQ)$, besides,  $[g_i ,D] \subseteq D,i = 1, \cdots ,m$.\\ Then Theorem \ref{result1} follows immediately according to Proposition \ref{result2}.

To prove the "only if " part, let $\tilde D$ be a 2k-dimensional invariant distribution for the geodesically accessible
affine connection control system ($\ref{Equ6}$),
which gives a quotient mechanical control system. Then according to Proposition \ref{result5}, we have
\[{D^{vlft}} \subseteq \tilde D,\]
where $D$ is a k-dimensional distribution on $Q$.
Since $\tilde D$ is invariant under the geodesic spray $S$, we have
\[\left[ {S,{D^{vlft}}} \right]  \subseteq \tilde D.\]
For $\dim span\left\{ {{D^{vlft}},\left[ {S,{D^{vlft}}} \right]} \right\} = 2k = \dim \tilde D$, this yields
 \[\tilde D = span\left\{ {{D^{vlft}},[S,{D^{vlft}}]} \right\}.\]
Since  $[S,[S,{D^{vlft}}]] \subseteq \tilde D$, then we know
\begin{equation}\label{Equ24}
{\nabla _{v}}X \in \Gamma^\infty (D), \forall X \in \Gamma^\infty (D), v\in \Gamma^\infty (TQ),
\end{equation}
according to ($\ref{Equ10}$) and ($\ref{Equ12}$).
That is, the affine connection $\nabla$ restricts to $D$.
This gives
\begin{equation}\label{Equ25}
\tilde D = span\left\{ {{D^H},{D^{vlft}}} \right\}.
\end{equation}
Then we derive \[(R(X, v){v} + ver(\nabla _{v^H}^H{({\nabla _{v}}X)^{vlft}}))\in \Gamma^ \infty (D),\]
for any $X \in \Gamma^ \infty (D)$, $v\in \Gamma^ \infty (TQ)$ according to ($\ref{Equ12}$).\\
Since $\nabla$ restricts to $D$, we have $ver(\nabla _{v^H}^H{({\nabla _{v}}X)^{vlft}})\in \Gamma^ \infty (D)$ for any $X \in \Gamma^ \infty (D)$ according to the definition
of ${\nabla ^H}$ (see \cite{11}).\\
So
\begin{equation}\label{Equ26}
R(X,v)v \in \Gamma^\infty(D), \forall X \in \Gamma^\infty (D),  v \in \Gamma^\infty (TQ).
\end{equation}
For $\tilde D$ is involutive, we have
\[\left[ {\left[ {S,{D^{vlft}}} \right],\left[ {S,{D^{vlft}}} \right]} \right] \subseteq \tilde D.\]
Then ($\ref{Equ10}$) gives that $D$ is involutive.\\
Since $\left[ {g_i^{vlft},\tilde D} \right] \subseteq \tilde D,i = 1, \cdots ,m,$ according to ($\ref{Equ25}$),
we have \[\left[ {g_i^{vlft},{X^H}} \right] = {({\nabla _X}{g_i})^{vlft}} \in \Gamma^\infty (\tilde D),\forall X \in \Gamma^\infty (D), ,i = 1, \cdots ,m.\]
This yields
${\nabla _X}{g_i} \in \Gamma^\infty (D),\forall X \in \Gamma^\infty (D),i = 1, \cdots ,m.$\\
Then we have $\left[ {X,{g_i}} \right] = {\nabla _X}{g_i} - {\nabla _{{g_i}}}X \in \Gamma^\infty (D),\forall X \in \Gamma^\infty (D),i = 1, \cdots ,m.$
That is,
\begin{equation}\label{Equ27}
\left[ {{g_i},D} \right] \subseteq D,i = 1, \cdots ,m.
\end{equation}
In fact, in local coordinates $(x,y)$ such that $D = span\left\{ {\frac{\partial }{{\partial {x^1}}}, \cdots ,\frac{\partial }{{\partial {x^k}}}} \right\}$, by computing we have
\[\tilde D = span\left\{ {\frac{\partial }{{\partial {x^1}}}, \cdots ,\frac{\partial }{{\partial {x^k}}},\frac{\partial }{{\partial {y^1}}}, \cdots ,\frac{\partial }{{\partial {y^k}}}} \right\}.\]
Then under the local coordinates $(x,y)$, the quotient system has the form
\begin{eqnarray}\label{Equ28}
 &&{{\dot x}^{k + 1}} = {y^{k + 1}}, \nonumber \\
 && \cdots  \nonumber \\
 &&{{\dot x}^n} = {y^n}, \nonumber \\
 &&{{\dot y}^{k + 1}} =  - \sum\limits_{k + 1 \le i,j \le n} {\Gamma _{ij}^{k + 1}({x^{k + 1}}, \cdots ,{x^n}){y^i}{y^j}}  + \sum\limits_{i = 1}^m {{u^i}} g_i^{k + 1}({x^{k + 1}}, \cdots ,{x^n}), \nonumber \\
  &&\cdots  \nonumber \\
 &&{{\dot y}^n} =  - \sum\limits_{k + 1 \le i,j \le n} {\Gamma _{ij}^n({x^{k + 1}}, \cdots ,{x^n}){y^i}{y^j}}  + \sum\limits_{i = 1}^m {{u^i}} g_i^n({x^{k + 1}}, \cdots ,{x^n}),
\end{eqnarray}
 which is an affine connection control system.

To prove the "if" part, suppose there exists an involutive k-dimensional distribution $D$ satisfying (\ref{Equ24}),(\ref{Equ26}) and (\ref{Equ27}).
Let \[\tilde D = span\left\{ {{D^{vlft}},[S,{D^{vlft}}]} \right\}.\]
Since the affine connection $\nabla$ restricts to $D$, we know equation (\ref{Equ25}) holds.
According to (\ref{Equ12}), (\ref{Equ24}) and (\ref{Equ26}), $\left[ {S,\left[ {S,{D^{vlft}}} \right]} \right] \subseteq span\left\{ {{D^H},{D^{vlft}}} \right\}.$
Then
\begin{equation}\label{Equ29}
\left[ {S,\tilde D} \right] \subseteq span\left\{ {\tilde D,\left[ {S,\left[ {S,{D^{vlft}}} \right]} \right]} \right\} = \tilde D.
\end{equation}
For \[\left[ {g_i^{vlft},{X^H}} \right] = {({\nabla _X}{g_i})^{vlft}} = {({\nabla _{{g_i}}}X + \left[ {X,{g_i}} \right])^{vlft}} \in \Gamma^\infty ({D^{vlft}}),\forall X \in \Gamma^\infty (D),\]
this yields
\begin{equation}\label{Equ30}
\left[ {g_i^{vlft},\tilde D} \right] \subseteq \tilde D,i = 1, \cdots ,m.
\end{equation}
Since $D$ is involutive and the affine connection $\nabla$  restricts to $D$, it is obvious that $\tilde D$ is involutive by computing.
Then it follows from ($\ref{Equ29}$) and ($\ref{Equ30}$) that $\tilde D$ is an invariant distribution for the geodesically accessible
affine connection control system (\ref{Equ6}). The left thing is to show that the quotient system induced by $\tilde D$ is mechanical.   In local coordinates $(x,y)$ such that $D = span\left\{ {\frac{\partial }{{\partial {x^1}}}, \cdots ,\frac{\partial }{{\partial {x^k}}}} \right\}$, by computing we have
\[\tilde D = span\left\{ {\frac{\partial }{{\partial {x^1}}}, \cdots ,\frac{\partial }{{\partial {x^k}}},\frac{\partial }{{\partial {y^1}}}, \cdots ,\frac{\partial }{{\partial {y^k}}}} \right\}.\]
Then under the local coordinates $(x,y)$, the quotient system has the form (\ref{Equ28}) which is mechanical.
\end{proof}
In the proof of Theorem \ref{result1}, together with Proposition \ref{result4}, we have also derived the following result, which shows that
the quotient mechanical system induced by an invariant distribution for a
geodesically accessible affine connection control system
must inherit its mechanical structure from the
original system.
\begin{corollary}{Corollary}\label{result6}
If question (i) is solvable,
then the quotient map $\tau :U \to V$ must has the form $\tau  = T\Phi$ with $\Phi : U_1 \to V_1 $ a submersion,
where $ U_1 $ and $V_1 $ are neighborhoods of $x_0 \in Q$ and ${\tilde x}_0 \in \tilde Q$.
Besides, the quotient mechanical control system is also a geodesically accessible affine connection control system.
\end{corollary}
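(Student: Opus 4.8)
The plan is to assemble the statement from the structural results already in hand, so that nothing genuinely new needs to be proved except one coordinate identification. First, by Proposition~\ref{result2}, solvability of question~(i) is equivalent to the existence of a quotient mechanical control system induced by an invariant distribution $\tilde D$ of the geodesic-spray system~(\ref{Equ6}); and the proof of that proposition already shows, via geodesic accessibility and the Orbit Theorem, that the quotient map $\tau:U\to V$ is necessarily a submersion. Thus it only remains to pin down the precise form of $\tau$ and to verify the mechanical structure of its target.

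Next I would invoke the analysis in the proof of Theorem~\ref{result1}. There one shows that the invariant distribution must have the form $\tilde D=\mathrm{span}\{D^{vlft},[S,D^{vlft}]\}=\mathrm{span}\{D^H,D^{vlft}\}$, where $D$ is a $k$-dimensional involutive distribution on $Q$ to which $\nabla$ restricts and which meets the curvature and bracket conditions of Theorem~\ref{result1}. Since $D$ is involutive, the Frobenius theorem provides, in a neighborhood $U_1$ of $x_0$, a submersion $\Phi:U_1\to V_1$ onto the local leaf space with $\ker T\Phi=D$; take $\tilde Q=V_1$. The key point is that the fibers of the tangent map $T\Phi$ coincide with the integral manifolds of $\tilde D$: in adapted coordinates $(x,y)$ on $TQ$ with $D=\mathrm{span}\{\partial/\partial x^1,\dots,\partial/\partial x^k\}$, one already knows from the proof of Theorem~\ref{result1} that $\tilde D=\mathrm{span}\{\partial/\partial x^1,\dots,\partial/\partial x^k,\partial/\partial y^1,\dots,\partial/\partial y^k\}$, and this is exactly $\ker T(T\Phi)$. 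Hence $\tau$ and $T\Phi$ are submersions with the same fibers near $(x_0,v_0)$, and after identifying the target neighborhood $V$ with $T\Phi(U)$ we get $\tau=T\Phi$ with $\Phi$ a submersion.

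For the mechanical structure of the quotient: in these adapted coordinates the induced quotient system is precisely~(\ref{Equ28}), which has neither a $g_0$-type nor a dissipative $d$-type term, so it is an affine connection control system $(\tilde Q,\tilde\nabla,\tilde\Im)$ — the descended connection $\tilde\nabla$ being well defined because $\nabla$ restricts to $D=\ker T\Phi$. That this quotient system is moreover geodesically accessible is exactly Proposition~\ref{result4}. Combining these two facts yields the second assertion of the corollary.

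The main obstacle is not conceptual but lies in the one computation that actually carries content, namely the identification $\ker T(T\Phi)=\tilde D$ that forces $\tau$ to be $T\Phi$ rather than merely a map fibering over $\Phi$; one must also check that the descended connection $\tilde\nabla$ is independent of the chosen complement to $D$, which again uses that $\nabla$ restricts to $D$. Everything else is a repackaging of Propositions~\ref{result2}, \ref{result4}, \ref{result5} and the proof of Theorem~\ref{result1}.
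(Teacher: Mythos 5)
Your proposal is correct and follows essentially the same route as the paper, which does not give a separate proof but simply observes that the corollary is extracted from the proof of Theorem~\ref{result1} (where $\tilde D=\mathrm{span}\{D^H,D^{vlft}\}$ and the adapted-coordinate form (\ref{Equ28}) identify $\tau$ as $T\Phi$ and the quotient as an affine connection system) together with Proposition~\ref{result4} for geodesic accessibility. Your additional remark that $\tau$ agrees with $T\Phi$ only after identifying $V$ with $T\Phi(U)$ via a diffeomorphism is a fair and slightly more careful reading of the same argument.
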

For affine connection control systems that are not geodesically accessible, the above corollary may not
hold. The quotient mechanical control system may even not be an affine
connection system. We give an example to explain this.
\begin{example}{Example}
 Consider the following affine connection control system on ${\mathbb{R}}^3$:
\begin{eqnarray}\label{Equ31}
 &&\dot x^1  = y^1, \nonumber \\
 &&\dot x^2  = y^2, \nonumber\\
 && \dot x^3  = y^3,  \nonumber\\
 && \dot y^1  = (y^1)^2  + y^1 y^2, \nonumber\\
 && \dot y^2  = (y^1)^2  - (y^2)^2  + y^1 y^2  + u, \nonumber\\
 &&\dot y^3  = v.
\end{eqnarray}
Given a submersion $\tau :{\mathbb{R}}^6  \to {\mathbb{R}}^2$ with
$\tau (x^1 ,x^2 ,x^3 ,y^1 ,y^2 ,y^3 ) = (y^1 ,y^2 )$, we get the
quotient system
\begin{eqnarray}\label{Equ32}
 &&\dot y^1  = (y^1)^2  + y^1 y^2,  \nonumber\\
 &&\dot y^2  = (y^1)^2  - (y^2)^2  + y^1 y^2  + u.
\end{eqnarray}
Let \begin{eqnarray}
 &&\tilde x^1  = y^1,  \nonumber\\
 &&\tilde x^2  = (y^1)^2  + y^1 y^2, \nonumber
\end{eqnarray}
 with $y^1  < 0$, then the system ($\ref{Equ32}$) becomes
\begin{eqnarray}\label{Equ33}
 &&{\dot{\tilde x}}^1  = \tilde x^2,  \nonumber\\
 &&{\dot{\tilde x}}^2  = 4{\tilde x}^1 {\tilde x}^2  - (\tilde x^1)^3  + {\tilde x}^1 u.
\end{eqnarray}
The system ($\ref{Equ33}$) is a mechanical control system on $\tilde Q = \left\{
{x:x < 0} \right\}$. It is not an affine connection system. And it doesn't inherit its mechanical structure from the system
($\ref{Equ31}$).
\end{example}
\section{Global properties}\label{sec4}
In this section, we study question (ii) given in the introduction. For the affine connection system (\ref{Equ5}), we use $\Im$ to denote the smallest sets of vector fields containing $g_i, i=1, \cdots,m$ and such that it is closed under the symmetric product. We make an assumption that the vector fields in $\Im$ are complete.
\begin{theorem}{Theorem} \cite{12} \label{result7}
If $\Im _j$ are sets of vector fields on manifold $M_j$, for $j=0,1$, and $\phi: M_0 \to M_1$ satisfies $\phi _ *\Im _0=\Im _1$, if the vector fields in both families are complete, then $\phi$ is a fiber bundle map on each orbit.
\end{theorem}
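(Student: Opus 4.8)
The plan is to use the global flows of the complete vector fields to present each orbit as a homogeneous space and then recognize $\phi$ restricted to an orbit as a surjective submersion that is locally a product. First I would record the consequences of the Orbit Theorem (see \cite{10}): the orbits of $\Im_0$ on $M_0$ and of $\Im_1$ on $M_1$ are connected immersed initial submanifolds, and at each point the tangent space to an orbit is spanned by the values there of the vector fields $\Phi_* X$ with $\Phi$ a composition of flows of the family and $X$ in the family. Because the members of $\Im_0$ are complete, each $e^{tX}$ ($X \in \Im_0$) is a global diffeomorphism of $M_0$; let $G \subseteq \mathrm{Diff}(M_0)$ be the group they generate and $\tilde G \subseteq \mathrm{Diff}(M_1)$ the analogous group for $\Im_1$. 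Then $G$ maps each orbit of $\Im_0$ onto itself and acts transitively on it, and likewise for $\tilde G$. The basic compatibility is that $\phi_* X = Y$ forces $\phi \circ e^{tX} = e^{tY} \circ \phi$ for every $t$; consequently, for any word $g = e^{t_1 X_1} \cdots e^{t_k X_k} \in G$ with $X_j \in \Im_0$, choosing $Y_j \in \Im_1$ with $\phi_* X_j = Y_j$ (possible since $\phi_* \Im_0 = \Im_1$) and setting $\tilde g = e^{t_1 Y_1} \cdots e^{t_k Y_k} \in \tilde G$ yields $\phi \circ g = \tilde g \circ \phi$. Finally, replacing $\Im_0$ and $\Im_1$ by their closures under conjugation by the flows of their own members — these enlarged families remain complete, generate the same orbits, and still satisfy $\phi_* \Im_0 = \Im_1$ because $\phi_*(\Phi_* X) = \Psi_*(\phi_* X)$ whenever $\phi \circ \Phi = \Psi \circ \phi$ — I may assume that the values $\{ Z(q) : Z \in \Im_1 \}$ already span $T_q \tilde O$ for every $q$ in every orbit $\tilde O$ of $\Im_1$, and similarly for $\Im_0$.

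Now fix $p_0 \in M_0$, put $q_0 = \phi(p_0)$, and let $O$, $\tilde O$ be the orbits through $p_0$ and $q_0$. I would first verify $\phi(O) = \tilde O$: the inclusion $\subseteq$ follows from the intertwining applied to arbitrary words, while any $\tilde g(q_0) = e^{t_1 Y_1} \cdots e^{t_k Y_k}(q_0)$ in $\tilde O$ equals $\phi$ of the lifted point $e^{t_1 X_1} \cdots e^{t_k X_k}(p_0) \in O$, giving $\supseteq$. Then $\phi|_O : O \to \tilde O$ is a submersion: choose $Y_1, \dots, Y_l \in \Im_1$ with $Y_i(q_0)$ spanning $T_{q_0}\tilde O$, lift them to $X_i \in \Im_0$ with $\phi_* X_i = Y_i$; each $X_i$ is tangent to $O$, so $X_i(p_0) \in T_{p_0}O$, and $T_{p_0}\phi(X_i(p_0)) = Y_i(q_0)$, whence $T_{p_0}\phi$ maps $T_{p_0}O$ onto $T_{q_0}\tilde O$. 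For a general $p = g(p_0) \in O$ (using transitivity of $G$) the identity $\phi|_O \circ g|_O = \tilde g|_{\tilde O} \circ \phi|_O$, differentiated at $p_0$, reads $T_p(\phi|_O) \circ T_{p_0}(g|_O) = T_{q_0}(\tilde g|_{\tilde O}) \circ T_{p_0}(\phi|_O)$; since $g|_O$ and $\tilde g|_{\tilde O}$ are diffeomorphisms, surjectivity of $T_{p_0}(\phi|_O)$ propagates to $p$.

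For local triviality, keep $Y_1, \dots, Y_l$ and $X_1, \dots, X_l$ as above. The map $\alpha(t_1, \dots, t_l) = e^{t_1 Y_1} \cdots e^{t_l Y_l}(q_0)$ has rank $l$ at the origin, hence restricts to a diffeomorphism from a neighborhood of $0$ in $\mathbb{R}^l$ onto a neighborhood $W$ of $q_0$ in $\tilde O$. Let $F = \phi^{-1}(q_0) \cap O$, an embedded submanifold of $O$ as a fiber of the submersion $\phi|_O$. Define $\Theta : \phi^{-1}(W) \cap O \to W \times F$ by $\Theta(x) = \bigl( \phi(x),\, e^{-t_l X_l} \cdots e^{-t_1 X_1}(x) \bigr)$ with $(t_1, \dots, t_l) = \alpha^{-1}(\phi(x))$; completeness makes every flow here well defined, the intertwining shows the second coordinate lands in $F$, and $\Theta$ is smooth with smooth inverse $(w, z) \mapsto e^{t_1 X_1} \cdots e^{t_l X_l}(z)$, $(t_1, \dots, t_l) = \alpha^{-1}(w)$. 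Thus $\phi|_O$ is a trivial bundle over $W$ compatibly with the projection to $W$; translating this chart by elements of $\tilde G$, which act transitively on the connected manifold $\tilde O$, covers $\tilde O$ by such trivializations, so $\phi|_O : O \to \tilde O = \phi(O)$ is a fiber bundle with all fibers diffeomorphic to $F$.

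The step I expect to be the main obstacle is the uniform-rank claim, that is, upgrading surjectivity of $T\phi$ from the base point to the whole orbit. The natural device is the homogeneity coming from the flow-generated groups, but one has to be careful that the assignment $g \mapsto \tilde g$ is defined only through a chosen word in the flows and need not descend to a homomorphism $G \to \tilde G$, so the argument must be carried out with individual lifted compositions rather than an abstract group morphism. Completeness enters essentially twice — to form the global groups acting transitively on orbits and to ensure the "undo-the-flow" map in the trivialization is everywhere defined — and the conclusion genuinely fails without it.
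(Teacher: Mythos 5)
The paper does not prove this statement at all---it is imported verbatim from McKay \cite{12}---so there is no in-paper argument to compare yours against. Your proof is correct and is essentially the standard proof of McKay's theorem: completeness turns the $\phi$-relatedness $\phi_*X=Y$ into the global intertwining $\phi\circ e^{tX}=e^{tY}\circ\phi$, enlarging the families by conjugation under their own flows makes the pointwise values span the orbit tangent spaces (so submersivity at one point propagates to the whole orbit by homogeneity under the flow-generated groups), and the ``undo-the-flows'' map gives the local trivializations. The only cosmetic points: for $\alpha$ to be a local diffeomorphism you want $Y_1(q_0),\dots,Y_l(q_0)$ to be a \emph{basis} of $T_{q_0}\tilde O$, not merely spanning, and the well-definedness of $\phi_*X$ as a vector field on $M_1$ (i.e.\ that $T\phi\circ X$ factors through $\phi$) is being taken as part of the hypothesis $\phi_*\Im_0=\Im_1$; both are implicit in your write-up and neither is a gap.
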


Recall that the vertical distribution for a fiber bundle $E$ is given by the vertical subspace of $TE$ (see \cite{9}), we have
\begin{theorem}{Theorem}\label{result8}
Question (ii) is solvable if and only if the configuration manifold $Q$ admits a fiber bundle structure such that the vertical distribution $D$ satisfying that
the affine connection $\nabla$ restricting to $D$, and the curvature tensor satisfying $R(X,v)v \in \Gamma^\infty  (D)$, for any $X \in \Gamma^\infty (D)$, $v \in \Gamma^\infty (TQ)$, besides, $[g_i ,D] \subseteq D,i = 1, \cdots ,m$.
\end{theorem}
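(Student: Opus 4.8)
The plan is to deduce Theorem~\ref{result8} from its local counterpart Theorem~\ref{result1}: I would show that in the global setting the quotient map is forced to come from a fiber bundle projection $\phi:Q\to\tilde Q$, and conversely that a fiber bundle projection whose vertical distribution satisfies the stated conditions assembles the local quotients of Theorem~\ref{result1} into a single global one.

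For the ``only if'' part I would start from a mechanical control system $(\tilde Q,\tilde\nabla,\tilde\Im_0,d)$ and a quotient map $\tau:TQ\to T\tilde Q$ satisfying (\ref{Equ7}). The argument of Proposition~\ref{result2}, which only uses geodesic accessibility and the Orbit Theorem, still shows that $\tau$ is a submersion, and Corollary~\ref{result6} gives, around every point, $\tau=T\Phi$ with $\Phi$ a submersion of configuration spaces; since all these local $\Phi$'s are determined by $\tau$ (through commutation with the tangent bundle projections) they patch to a single surjective submersion $\phi:Q\to\tilde Q$ with $\tau=T\phi$, and the quotient is a geodesically accessible affine connection system, so $\tilde g_0=0$ and $\tilde d=0$. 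From (\ref{Equ7}) one reads off $\phi_*g_i=\tilde g_i$, and because $\tau_*S=\tilde S$ the bracket identity (\ref{Equ11}) pushes forward through $\tau$ to give $\phi_*\langle X:Y\rangle=\langle\phi_*X:\phi_*Y\rangle$; inducting on the generation of the symmetric closure yields $\phi_*\Im=\tilde\Im$. The vector fields in $\Im$ are complete by hypothesis, so their $\phi$-projections are complete as well ($\phi$ is surjective), and I can then invoke Theorem~\ref{result7}: $\phi$ is a fiber bundle map on each orbit of $\Im$. Here I would use geodesic accessibility once more --- $\Im$ spans $T_xQ$ at every $x$, so every orbit is open and (for connected $Q$) there is one orbit, namely $Q$ --- to conclude that $\phi:Q\to\tilde Q$ is genuinely a fiber bundle. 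Finally, its vertical distribution $D=\ker T\phi$ satisfies $\mathrm{span}\{D^H,D^{vlft}\}=\ker T\tau$, which is exactly the invariant distribution of Theorem~\ref{result1}, so the ``only if'' direction of that theorem hands me the three structural conditions on $D$.

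For the ``if'' part I would take a fiber bundle $\pi:Q\to\tilde Q$ with vertical distribution $D$ satisfying the conditions, cover $Q$ by bundle charts $(x,y)$ in which $D=\mathrm{span}\{\partial/\partial x^1,\dots,\partial/\partial x^k\}$, and apply the ``if'' direction of Theorem~\ref{result1} in each chart: $\tilde D=\mathrm{span}\{D^H,D^{vlft}\}$ is invariant for (\ref{Equ6}) and the induced local quotient is the affine connection system (\ref{Equ28}). The conditions ``$\nabla$ restricts to $D$'' and ``$[g_i,D]\subseteq D$'' make the Christoffel symbols and vector fields appearing in (\ref{Equ28}) descend to well-defined objects on $\tilde Q$, so these local quotients are the coordinate expressions of one global geodesically accessible affine connection control system $(\tilde Q,\tilde\nabla,\tilde\Im)$; taking $\tau=T\pi$ then verifies (\ref{Equ7}) with $\tilde g_0=0$, $\tilde d=0$, and Question (ii) is solved.

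The hard part will be the local-to-global passage in the ``only if'' direction: checking carefully that $\phi_*\Im=\tilde\Im$ holds as an equality of families of \emph{complete} vector fields (so that Theorem~\ref{result7} actually applies), and that the ``fiber bundle on each orbit'' conclusion collapses, via geodesic accessibility, to a fiber bundle structure on all of $Q$. The descent of $\nabla$ and of the $g_i$ through $\pi$ in the ``if'' direction is routine once the bundle charts and the structural conditions are in hand.
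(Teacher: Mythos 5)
Your proposal follows essentially the same route as the paper: show $\tau$ is a submersion of the form $T\Phi$ via the local theory of Section~\ref{sec3}, establish $\Phi_*\Im=\tilde\Im$, invoke Theorem~\ref{result7} together with geodesic accessibility (so the single orbit is all of $Q$) to get the fiber bundle structure, and read off the conditions on the vertical distribution from Theorem~\ref{result1}. If anything, you supply details the paper leaves implicit --- the patching of the local $\Phi$'s, the completeness of the pushed-forward family so that Theorem~\ref{result7} applies, and the descent argument for the ``if'' direction, which the paper simply declares obvious.
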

\begin{proof}
If question (ii) is solvable, according to (\ref{Equ7}), we have
\begin{equation}\label{Equ34}
{\tau _*}{\nu _i} = {\tilde \nu _i},i \in {{\rm N}_{ > 0}},
\end{equation}
where $\nu_i$ and $\tilde \nu_i$ are the sets of vector fields given in (\ref{Equ13}), respectively  for the geodesically accessible affine connection control system $(Q,\nabla ,\Im )$ and its quotient system $(\tilde Q,\tilde \nabla ,\tilde \Im _0 ,d).$

On the other hand, similar with the proof of Proposition \ref{result2}, we know the quotient map $\tau : TQ \to T\tilde Q$  is a submersion.
Then according to Section \ref{sec3}, $\tau$  has the form $\tau  = T\Phi$ with $\Phi : Q \to \tilde Q$  being a submersion, further, the quotient mechanical control system is also a geodesically accessible affine connection control system. Thus according to (\ref{Equ34}),
we have
\[\Phi_*\Im=\tilde \Im,\]
where $\tilde \Im$  denotes the smallest sets of vector fields containing $\tilde{g}_i, i=1, \cdots,m$ and such that it is closed under the symmetric product.

Since the orbits of $\Im$ and $\tilde \Im$ are $Q$ and $\tilde Q$ respectively, then from Theorem \ref{result7},  we know that $\Phi : Q \to \tilde Q$  is a fiber bundle. The remaining results hold according to Theorem \ref{result1}. This completes the proof for the "only if" part. The proof of the "if" part is obvious.
\end{proof}
\section{Conclusions}\label{sec5}
In this paper we have studied the quotients of affine connection
control systems. We are interested in the special structures in
the class of affine connection control systems, such that further
results can be derived for the quotient systems. The main part dealt
with questions (i) and (ii) given in the introduction. We gave
necessary and sufficient conditions for geodesically accessible
affine connection control systems such that the questions are solvable.
We showed that all the quotient mechanical control systems induced by
invariant distributions must inherit the mechanical structures from
the original geodesically accessible affine connection systems. For affine connection control
systems that are not geodesically accessible, an example was given
to show the non-uniqueness of the form of the quotient map.


\begin{thebibliography}{}
%
\bibitem{1}
Jessy W. Grizzle and Steven I. Marcus: The structure of nonlinear
control systems possessing symmetries, {\em IEEE Transactions on
Automatic Control\/} {\bf 3}, 248 (1985).

\bibitem{2}
H. Nijmeijer and A. J. Van der Schaft: Partial symmetries for
nonlinear systems, {\em Mathematical Systems Theory\/} {\bf 1}, 79 (1985).

\bibitem{3}
H. Nijmeijer and A. J. Van der Schaft: {\em Nonlinear Dynamical Control
Systems\/}, Springer, New York 1995.

\bibitem{4}
Paulo Tabuada and George J. Pappas: Abstractions of Hamiltonian control systems,
{\em Automatica\/} {\bf 39}, 2025 (2003).


\bibitem{5}
Paulo Tabuada, George J. Pappas and Pedro Lima: {\em Composing
abstractions of hybrid systems\/}, in Hybrid systems: Computation and
Control, Claire J. Tomlin and Mark R. Greenstreet eds., Lecture Notes in Computer Science {\bf 2289}, 436 (2002).

\bibitem{6}  F. Bullo and A. D. Lewis: {\em Geometric Control of Mechanical Systems. Modeling, Analysis and
Design for Simple Mechanical Control Systems\/}, Springer,  New York 2004.


\bibitem{7}  A. D. Lewis:
The category of affine connection control systems, {\em Proceedings of
the 39th IEEE Conference on Decision and Control\/}, (2000).

\bibitem{8} Sandra Ricardo and Witold Respondek: When is a control system mechanical,
{\em Journal of Geometric Mechanics\/} {\bf 3}, 265 (2010).

\bibitem{9}  S. Kobayashi and K. Nomizu: {\em Foundations of Differential Geometry, volume 1\/ },
  Interscience publishers, New York 1963.

\bibitem{10} H. J. Sussmann: Orbits of families of vector fields and integrability of distributions, {\em Transactions of
the American Mathematical Scoeity\/} {\bf 180}, 171 (1973).

\bibitem{11} K. Yano and S. Ishihara:
  {\em Tangent and Contangent Bundles: Differential Geometry\/}, Marcel Dekker Inc., the university of Michigan 1973.

\bibitem{12} B. Mckay: Sussmann's orbit theorem and maps, {\em Differential Geometry and its Applications\/}
{\bf 25}, 277 (2007).

\end{thebibliography}


\end{document}